\def\ps@pprintTitle{%
 \let\@oddhead\@empty
 \let\@evenhead\@empty
 \def\@oddfoot{}%
 \let\@evenfoot\@oddfoot}
\newcommand{\Dini}{\operatorname{Dini}}
\newcommand{\vertiii}[1]{{\left\vert\kern-0.25ex\left\vert\kern-0.25ex\left\vert #1
    \right\vert\kern-0.25ex\right\vert\kern-0.25ex\right\vert}}
\newcommand{\BMO}[0]{\operatorname{BMO}}
\newcommand{\supp}[0]{\operatorname{supp}}
\newcommand{\loc}[0]{\operatorname{loc}}
\renewcommand{\Re}[0]{\operatorname{Re}}
\newcommand{\R}{\mathbb{R}}
\newcommand{\Z}{\mathbb{Z}}
\newcommand{\Ss}{\mathscr{S}}
\newcommand{\D}{\mathscr{D}}
\theoremstyle{plain}
\newtheorem{theorem}[equation]{Theorem}
\newtheorem{corollary}[equation]{Corollary}
\newtheorem{lemma}[equation]{Lemma}
\theoremstyle{definition}
\numberwithin{equation}{section}
\begin{document}

\begin{frontmatter}

\title{$A_1$ theory of weights for rough homogeneous singular integrals and commutators}

\author[ehu,Ikerbasque,BCAM]{Carlos P\'erez\fnref{Fund1}}
\ead{carlos.perezmo@ehu.es}
\author[Is]{Israel P. Rivera-R\'{\i}os\fnref{Fund2}}
\ead{petnapet@gmail.com}
\author[UR]{Luz Roncal\fnref{Fund3}}
\ead{luz.roncal@unirioja.es}

\fntext[Fund1]{Supported by the Basque Government through the BERC 2014-2017 program and by Spanish Ministry of Economy and Competitiveness MINECO: BCAM Severo Ochoa excellence accreditation SEV-2013-0323 and the project MTM2014-53850-P.}
\fntext[Fund2]{Supported by Spanish Ministry of Economy and Competitiveness MINECO through the project MTM2012-30748.}
\fntext[Fund3]{Supported by Spanish Ministry of Economy and Competitiveness MINECO through the project MTM2015-65888-C04-4-P.}

\address[ehu]{Department of Mathematics, University of the Basque Country, Bilbao, Spain}
\address[Ikerbasque]{Ikerbasque, Basque Foundation of Science, Bilbao, Spain}
\address[BCAM]{BCAM - Basque Center for Applied Mathematics, Bilbao, Spain}
\address[Is]{IMUS and Departamento de An\'alisis Matem\'atico, Universidad de Sevilla, Sevilla, Spain}
\address[UR]{Departamento de Matem\'aticas y Computaci\'on, Universidad de La Rioja, Edificio CCT - C/Madre de Dios 53, 26006 Logro\~no, Spain}

\begin{abstract}
Quantitative $A_1-A_\infty$ estimates for rough homogeneous singular integrals $T_{\Omega}$ and commutators of $\BMO$ symbols and $T_{\Omega}$ are obtained. In particular the following estimates are proved:
\[
\|T_\Omega \|_{L^p(w)}\le c_{n,p}\|\Omega\|_{L^\infty}  [w]_{A_1}^{\frac{1}{p}}\,[w]_{A_{\infty}}^{1+\frac{1}{p'}}\|f\|_{L^p(w)}
\]
and
\[
\| [b,T_{\Omega}]f\| _{L^{p}(w)}\leq c_{n,p}\|b\|_{\BMO}\|\Omega\|_{L^{\infty}} [w]_{A_1}^{\frac{1}{p}}[w]_{A_{\infty}}^{2+\frac{1}{p'}}\|f\|_{L^{p}\left(w\right)},
\]
for $1<p<\infty$ and $1/p+1/p'=1$.
\end{abstract}



\end{frontmatter}




\section{Introduction and main results}

For $1< p < \infty$, the Muckenhoupt class $A_p$ is the set of weights $w$, that is, non-negative locally integrable functions, for which $w^{1-p'}\in L_{\loc}^{1}(\R^n)$ and
\begin{equation*}
[w]_{A_{p}}:=\sup_{Q}\left(\frac{1}{|Q|}\int_{Q}w\right)\left(\frac{1}{|Q|}\int_{Q}w^{-\frac{1}{p-1}}\right)^{p-1}<\infty,
\end{equation*}
where the supremum is taken over all cubes in $\R^n$. The quantity $[w]_{A_p}$ is called the $A_p$ constant of the weight and sometimes it is also called the weight characteristic.

The study of the sharp dependence on the $A_p$ constant of weighted inequalities for the main operators in Harmonic Analysis goes back to S. M. Buckley \cite{B}. For the Hardy--Littlewood maximal function $M$ he proved, for $1<p<\infty$, that
\[\|M\|_{L^p(w)\to L^p(w)}\leq c_{n,p} [w]_{A_p}^{\frac{1}{p-1}},
\]
where $w$ is an $A_p$ weight.
Later on, the so-called $A_2$ conjecture for the Ahlfors-Beurling transform, raised in \cite{AIS} and solved by S. Petermichl and A. Volberg in \cite{PV}, motivated the study of the sharp dependence on the $A_p$ constant of several operators such as the Hilbert transform, the Riesz transforms, or general Calder\'on-Zygmund operators (see for instance \cite{HA2,Le2,P1,P2} and the most recent developments in \cite{CAR,La,L, LN}), commutators (see \cite{CPP}), or the square function (see \cite{HL}).

Recently, the question of getting sharp quantitative weighted estimates has been also considered for another class of operators, namely the rough singular integrals. Let us consider $\Omega\in L^\infty(\mathbb{S}^{n-1})$ such that $\int_{\mathbb{S}^{n-1}}\Omega=0$. We define the kernel \[K(x)=\frac{\Omega(x')}{|x|^n}\] where $x'=\frac{x}{|x|}$. It is clear that $K$ is homogeneous of degree $-n$. We also observe that it satisfies the standard size estimate, but no angular smoothness is imposed. We define now \[T_\Omega f(x)= \text{p.v.} \int_{\mathbb{R}^n}\frac{\Omega(y')}{|y|^n}f(x-y)dy.
\]
We refer to \cite{Duo-HomenajeRubio} for a very nice review of the theory of these singular integrals. More recently, T. P. Hyt\"onen, the third author, and O. Tapiola obtained the following result in \cite{HRT} (there, the result is stated in terms of $A_p-A_{\infty}$ weights as introduced in \cite{HP1,HPR1}, but here we state it just in terms of the $A_p$ constant, for clarity)
\begin{equation}\label{HRTAp}
\|T_\Omega\|_{L^p(w)\to L^p(w)}\leq c_{n,p}[w]_{A_p}^{2\max\big\{1,\frac{1}{p-1}\big\}},
\end{equation}
which holds for every $w\in A_p$. This result can be seen as an updated version of the theorem of J. Duoandikoetxea and J. L. Rubio de Francia \cite{DuoRdF}. In \cite{HRT} it is conjectured that the dependence on the $A_2$ constant is linear. To prove or disprove such conjecture does not seem to be a trivial question.

As an application of this result, a commutator estimate with $\BMO$ functions can be derived by means of the conjugation method as introduced in \cite[p. 621]{CRW} (see also \cite{ABKP,CPP,PRR}). Indeed, recall that given a linear operator $T$ and $b\in \BMO$ the commutator of Coifman-Rochberg-Weiss $[b,T]$ is defined as
\[[b,T]f(x)=b(x)Tf(x)-T(bf)(x).\]
Then \eqref{HRTAp} combined with the method from \cite{CPP} yields the following estimate
\[
\|[b,T_\Omega]f\|_{L^p(w)\to L^p(w)} \leq c_{n,p} \|b\|_{\BMO} [w]_{A_p}^{3\max\big\{1,\frac{1}{p-1}\big\}}.
\]
In this case it is  also possible to obtain a result in terms of $A_p-A_\infty$ constant (cf. \cite{HP1}) but again we have stated the estimate just in terms of the $A_p$ constant for clarity.  At this point we would also like to stress the fact
that the conjugation method
relies upon the fact that $\exp(sb)\in A_p$ for every $1<p<\infty$, $b\in\BMO$ and $s\in\mathbb{R}$ small enough. However, this is not true  for $p=1$, namely in the $A_{1}$ case.  Recall that a weight $w$ belongs to the class $A_1$ if there is a finite constant $\kappa>0$ such that
$$ Mw(y) \le \kappa w(y) \qquad\text{a.e.},
$$
and the infimum of those constants $\kappa$ is called the $A_1$ constant
of $w$ and is denoted by $[w]_{A_{1}}$. Now we turn our attention to the dependence on the $A_1$ constant of a weight $w$. C. Fefferman and E. M. Stein proved the following inequality for the Hardy--Littlewood maximal function in \cite{FS}
$$
\|Mf\|_{L^{1,\infty}}\le c\int_{\R^n}|f|Mw\,dx.
$$
B. Muckenhoupt and R. L. Wheeden naturally conjectured that the maximal operator $M$ could be replaced by a Calder\'on--Zygmund operator on the left hand side of the above inequality. In particular, for the Hilbert transform $H$ the conjecture stated that there exists a constant $c>0$ such that for every weight $w$ the following inequality holds
\[\|Hf\|_{L^{1,\infty}(w)}\leq c \|f\|_{L^1(Mw)}.
\]
This conjecture was disproved by M. C. Reguera and C. Thiele in \cite{RT}. Nevertheless, the question led to a weaker conjecture, the so-called $A_1$ conjecture, which states that there exists a constant $c>0$ such that for any $w\in A_1$ we have
\[\|Hf\|_{L^{1,\infty}(w)}\leq c [w]_{A_1}\|f\|_{L^1(w)}.
\]
Motivated by the $A_1$ conjecture, A. K. Lerner, S. Ombrosi, and the first author proved in \cite{LOP1} the following estimate for any Calder\'on-Zygmund operator~$T$

\begin{equation}\label{A1LOP}\|Tf\|_{L^{1,\infty}(w)}\leq  c[w]_{A_1}\log\left(e+[w]_{A_1}\right) \|f\|_{L^1(w)}.
\end{equation}
To obtain \eqref{A1LOP}, the key point was to prove first the following special two-weight $L^p$ estimate with a good control in the bound: if \,$1<p<\infty$\,  and  $1<r<\infty$,
\begin{equation}\label{LpMrT}
 \|Tf\|_{L^p(w)} \leq c_T p p' (r')^{\frac{1}{p'}} \|f\|_{L^p(M_r w)}  \qquad w\geq 0,
\end{equation}
where as usual if $p>1$, $1/p+1/p'=1$ and $M_r$ is defined in \eqref{DefMq}. As a consequence, the following sharp control of $\|T\|_{L^p(w)\to L^p(w)}$ in terms of $p$ and of the $A_1$ constant follows:
\begin{equation}\label{A1pmay1}
\|T\|_{L^p(w)\to L^p(w)}\leq cpp'[w]_{A_1},
\end{equation}
for $1<p<\infty$. In the recent paper \cite{NRVV} the $A_1$ conjecture is disproved by using the Bellman function technique. In fact it is established there that the logarithmic term cannot be completely removed, so \eqref{A1LOP} seems to be sharp.

The aim of this paper is to obtain $A_1$ bounds in the spirit of \eqref{A1pmay1} for rough homogeneous singular integrals and also for their commutators with BMO functions.
The method we follow consists of obtaining two-weight estimates (Theorem \ref{thm:main1} below) and then derive mixed $A_1-A_{\infty}$ type results (Corollary \ref{cor1} below). The proof of those estimates are based on a decomposition of $T_\Omega$ as a sum of Calder\'on-Zygmund operators (see Section \ref{PrelimNotation}) and a suitable use of interpolation with change of measures, following the ideas of \cite{DuoRdF}, \cite{HRT}, and \cite{W}. For $q>0$, let us define $M_{q}$ by
\begin{equation}
\label{DefMq}
M_{q}f(x):=\sup_{Q\ni x} \Big(\frac{1}{|Q|}\int_Q|f|^{q}\,dt\Big)^{1/q}.
\end{equation}
Our main result is the following.

\begin{theorem}
\label{thm:main1}Let $T_{\Omega}$ be a \textit{rough homogeneous singular integral} and $b\in \BMO$. Let $1<p,r<\infty$. Then, for all $w\ge0$, $0<\theta<1$, we have
\begin{equation}\label{eq:desigr}
 \|T_\Omega f\|_{L^p(w)}\le c_{n,p}\|\Omega\|_{L^\infty}\frac{1}{1-\theta}(r')^{\frac{\theta}{p'}}\|f\|_{L^p(M_{r/\theta}(w))}
 \end{equation}
 and
\begin{equation}\label{eq:desigrComm}
\| [b,T_{\Omega}]f\| _{L^{p}(w)}\leq c_{n,p}\|b\|_{\BMO}\|\Omega\|_{L^{\infty}}\frac{1}{1-\theta}(r')^{\theta\big(1+\frac{1}{p'}\big)}\|f\|_{L^{p}(M_{r/\theta}(w))}.
 \end{equation}
\end{theorem}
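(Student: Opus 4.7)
The plan is to decompose $T_{\Omega}=\sum_{k\in\Z}T_{k}$ via the Duoandikoetxea--Rubio de Francia dyadic annular decomposition (set up in the preliminaries, cf.\ \cite{DuoRdF,HRT}) and to apply the two-weight theory from \cite{LOP1} to each piece. The key features of the decomposition are that each $T_{k}$ is a Calder\'on--Zygmund operator whose H\"ormander constants grow at most linearly in $|k|$, while its $L^{2}\to L^{2}$ norm decays exponentially, $\|T_{k}\|_{L^{2}\to L^{2}}\le c_{n}\|\Omega\|_{L^{\infty}}2^{-\alpha|k|}$, for some absolute $\alpha>0$ coming from standard Fourier analysis. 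Interpolating this $L^{2}$-decay against the linear-in-$|k|$ Calder\'on--Zygmund bound on an auxiliary $L^{p_{0}}$ with $p_{0}\neq 2$ transfers the exponential decay to every $L^{p}$: $\|T_{k}\|_{L^{p}\to L^{p}}\le c_{n,p}\|\Omega\|_{L^{\infty}}2^{-\alpha_{p}|k|}$ with some $\alpha_{p}>0$.

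For each $T_{k}$ the two-weight inequality \eqref{LpMrT} yields
$$
\|T_{k}f\|_{L^{p}(w)}\le c_{n}\|\Omega\|_{L^{\infty}}(1+|k|)\,pp'(r')^{1/p'}\,\|f\|_{L^{p}(M_{r}w)}.
$$
I would then apply Stein--Weiss interpolation with change of measures at exponent $p$ between this two-weight estimate, used at weight $w^{1/(1-\tau)}$ in place of $w$, and the unweighted decaying bound, with interpolation fraction $1-\tau$ on the former. The output weight collapses to $w$ while the input weight reduces to $\bigl(M_{r}w^{1/(1-\tau)}\bigr)^{1-\tau}=M_{r/(1-\tau)}w$; setting $\theta:=1-\tau$ this yields
$$
\|T_{k}f\|_{L^{p}(w)}\le c_{n,p}\|\Omega\|_{L^{\infty}}(1+|k|)^{\theta}(r')^{\theta/p'}\,2^{-\alpha_{p}(1-\theta)|k|}\,\|f\|_{L^{p}(M_{r/\theta}w)}.
$$
Summing the geometric series $\sum_{k\in\Z}(1+|k|)^{\theta}2^{-\alpha_{p}(1-\theta)|k|}$ then produces a factor comparable to $1/(1-\theta)$ and delivers \eqref{eq:desigr}.

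The commutator estimate \eqref{eq:desigrComm} follows by the same three-step scheme applied to $[b,T_{\Omega}]=\sum_{k}[b,T_{k}]$. On the two-weight side one invokes the commutator analogue of \eqref{LpMrT}, $\|[b,T_{k}]f\|_{L^{p}(w)}\le c(1+|k|)\|b\|_{\BMO}(pp')^{2}(r')^{1+1/p'}\|f\|_{L^{p}(M_{r}w)}$, obtainable from \cite{CPP,PRR} with explicit tracking of the Calder\'on--Zygmund constant of $T_{k}$. The additional $r'$ factor here is precisely what produces the $(r')^{\theta}$ upgrade from \eqref{eq:desigr} to \eqref{eq:desigrComm} under the $\theta$-interpolation. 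The main obstacle is the unweighted side: one needs an exponentially decaying $L^{p}$-bound $\|[b,T_{k}]f\|_{L^{p}}\le c_{n,p}\|b\|_{\BMO}\,2^{-\beta_{p}|k|}\|f\|_{L^{p}}$, because the na\"ive estimate $\|[b,T_{k}]\|\le c\|b\|_{\BMO}\|T_{k}\|_{\mathrm{CZ}}$ reintroduces $|k|$-growth through the CZ constants and swamps the decay. This is overcome by the conjugation identity expressing $[b,T_{k}]$ as the derivative at $z=0$ of $e^{-zb}T_{k}(e^{zb}\,\cdot\,)$, controlled by Cauchy's formula on a small disc $|z|\le\varepsilon$ where $e^{zb}$ is an $A_{p}$-weight, which transfers the weighted $L^{p}$-decay of $T_{k}$ to $[b,T_{k}]$.
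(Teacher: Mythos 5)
Your high-level scheme is the same as the paper's: split $T_\Omega$ into Calder\'on--Zygmund pieces enjoying an exponentially decaying unweighted $L^p$ bound and a polynomially growing two-weight $L^p(M_rw)$ bound of the type \eqref{LpMrT}, apply Stein--Weiss interpolation with change of measures at exponent $p$ (your algebra confirming that the interpolated input weight is $M_{r/\theta}w$ is correct), and sum. The commutator is handled via the Coifman--Rochberg--Weiss conjugation trick to transfer the decay, which is also what the paper does in Lemma \ref{LemmaUnweighted}. However, there are two concrete problems.

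First, the decomposition you describe is wrong. The pieces you call $T_k$ --- convolution against $\Omega(y')/|y|^n$ restricted to the dyadic annulus $2^k\le|y|<2^{k+1}$ --- do \emph{not} have exponentially decaying $L^2\to L^2$ norm in $|k|$. The kernel of $T_k$ is a dilate of the kernel of $T_0$, so $\widehat{K_k}(\xi)=\widehat{K_0}(2^k\xi)$ and all the $T_k$ have exactly the same $L^2$ norm; there is no decay in $k$ to interpolate against. The exponential gain in \cite{DuoRdF,HRT} lives in the \emph{frequency-localized} blocks $\sum_{k\in\Z}T_k(S_{k-N(j)}-S_{k-N(j-1)})$ (the paper's $\widetilde T_j^N$), not in the raw physical-space annular truncations. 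If you feed the raw $T_k$ into your scheme, the unweighted decay is simply absent and the argument collapses.

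Second, even replacing $T_k$ by the correct frequency pieces, your summation step does not produce the factor $1/(1-\theta)$. After interpolation you have a term of the form $(1+|k|)^{\theta}\,2^{-\alpha_p(1-\theta)|k|}$ (or $(1+|k|)\,2^{-\alpha_p(1-\theta)|k|}$ if, more realistically, the unweighted bound also carries the linear Hörmander growth). Summing a linearly-indexed family of this type gives, as $\theta\to1$,
\[
\sum_{k\ge 0}(1+k)^{\theta}\,2^{-\alpha_p(1-\theta)k}\;\sim\;\frac{\Gamma(1+\theta)}{\big(\alpha_p(1-\theta)\ln 2\big)^{1+\theta}}\;\sim\;\frac{1}{(1-\theta)^{2}},
\]
not $1/(1-\theta)$. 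This is precisely the point of Lemma \ref{Summation} in the paper: the pieces are grouped lacunarily via $N(j)=2^j$, so the decay is $2^{-\alpha_p 2^{j-1}(1-\theta)}$ against a growth $1+2^{j}$, and the sum is genuinely $O\big(1/(1-\theta)\big)$. The difference matters downstream: with your $1/(1-\theta)^{2}$ you would obtain $[w]_{A_\infty}^{2+1/p'}$ in the first estimate of Corollary \ref{cor1} instead of $[w]_{A_\infty}^{1+1/p'}$, i.e.\ a strictly weaker result. So the exponential regrouping is not a cosmetic choice but an essential ingredient that your proposal omits.
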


Observe that the estimate \eqref{eq:desigr} is similar in spirit to \eqref{LpMrT} but worse due to the influence of the parameter $\theta$ which comes from the use the interpolation theorem with change of measure. Even though it is not clear how to avoid such interpolation theorem, we believe that our estimate is not optimal.
In fact, some modifications of the proof of the theorem could allow to state the estimate \eqref{eq:desigr} above in a more general form replacing
 the maximal function $M_{q}(w)$, $q\in (1,\infty)$,  defined using the $L^q$ norm, by a more general maximal function $M_{\Psi(L)}$
 as defined in \eqref{OrliczMax} below.  To be more precise, it is possible to prove an extension of \eqref{eq:desigr} as follows
 $$
 \|T_\Omega f\|_{L^p(u)}\le c_{n,p}\|\Omega\|_{L^\infty}\frac{1}{1-\theta}\|M_{\bar{\Psi}(L)}\|_{L^{p'}\rightarrow L^{p'}}^{\theta} \|f\|_{ L^p(M_{ \Psi_{\theta p}(L)   }(u)) }\qquad  0<\theta <1,
$$
where $\Psi_\rho(t)=\Psi\big(t^\frac{1}{\rho}\big)$ and $\bar{\Psi}$ is called the \textit{complementary function}  of $\Psi$ (see \eqref{complementaria}). The issue here is that the appearance of the parameter $\theta$ prevents obtaining a sharper result with logarithmic functions instead of power functions.  As a really sharp theorem we would expect results including just logarithmic type maximal function as those obtained in \cite{P} (cf. also \cite{HP2}) for the case of
classical Calder\'on-Zygmund operators. For instance, we conjecture that the following estimate holds for $p\in(1,\infty)$:
$$
 \|T_\Omega f\|_{L^p(w)}\le c_{n,p}\|\Omega\|_{L^\infty}\,  \|f\|_{L^p(M^{[p] +1}(w) )}   \qquad w\geq 0,
$$
which should be false for $M^{[p]}$, where $M^k$ is the $k$-th iterated maximal function, as in the case of Calder\'on-Zygmund operators.

With a suitable choice of $\theta$ in Theorem \ref{thm:main1} we will obtain mixed $A_1-A_\infty$ bounds similar to those obtained for the first time in \cite{HP1} (see also \cite{HP2}). Let us discuss first about the definition of the $A_{\infty}$ class and the $A_{\infty}$ constant. The $A_p$ classes are increasing with respect to $p$, so it is natural to define the $A_{\infty}$ class of weights as
$A_{\infty} = \cup_{p>1} A_p$. From the definition of $A_{\infty}$ it is not clear how to define an appropriate constant that characterizes the class. However, N. Fujii obtained essentially in \cite{F} the following characterization, rediscovered later by J. M. Wilson in \cite{Wi}: we will say that $w$ belongs to the $A_{\infty}$ class when the following quantity
\begin{equation}
\label{eq:infinity}
[w]_{A_{\infty}}:=\sup_{Q}\frac{1}{w(Q)}\int_{Q}M(\chi_{Q}w)dx
\end{equation}
is finite.
Here, $w(Q):=\int_Qw(x)\,dx$, and the supremum is taken over all cubes with edges parallel to the coordinates axes.

The following corollary summarizes the $A_1-A_\infty$ bounds, with the definition of $A_{\infty}$ in the sense just described, that we can obtain from Theorem~\ref{thm:main1}.

\begin{corollary} \label{cor1}
Let $1<p<\infty$. Then, if $w \in A_{\infty}$
$$
\|T_\Omega f\|_{L^p(w)}\le c_{n,p}\|\Omega\|_{L^\infty}  [w]_{A_{\infty}}^{1+\frac{1}{p'}}\, \|f\|_{L^p(Mw)}
$$
and
$$
\|[b,T_{\Omega}]f\| _{L^{p}(w)}\leq c_{n,p}\|b\|_{\BMO}\|\Omega\|_{L^{\infty}}[w]_{A_{\infty}}^{2+\frac{1}{p'}}\|f\|_{L^{p}\left(Mw\right)}.
$$
Therefore, if $w\in A_1$
$$
\|T_\Omega \|_{L^p(w)}\le c_{n,p}\|\Omega\|_{L^\infty}  [w]_{A_1}^{\frac{1}{p}}[w]_{A_{\infty}}^{1+\frac{1}{p'}}\,\|f\|_{L^p(w)}
$$
and
$$
\| [b,T_{\Omega}]f\| _{L^{p}(w)}\leq c_{n,p}\|b\|_{\BMO}\|\Omega\|_{L^{\infty}}[w]_{A_1}^{\frac{1}{p}}[w]_{A_{\infty}}^{2+\frac{1}{p'}}\|f\|_{L^{p}(w)}.
$$
\end{corollary}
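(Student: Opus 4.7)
The plan is to derive Corollary~\ref{cor1} from Theorem~\ref{thm:main1} by optimally tuning the free parameters $r$ and $\theta$ in \eqref{eq:desigr} and \eqref{eq:desigrComm} against the sharp reverse H\"older inequality for $A_{\infty}$ weights.

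The key input is the sharp reverse H\"older property: if $w\in A_{\infty}$, then setting $s_{0}:=1+\frac{1}{\tau_{n}[w]_{A_{\infty}}}$ for a dimensional constant $\tau_{n}$, one has
$$\Bigl(\frac{1}{|Q|}\int_{Q}w^{s_{0}}\Bigr)^{1/s_{0}}\le 2\,\frac{w(Q)}{|Q|}\qquad\text{for every cube }Q,$$
from which $M_{s_{0}}(w)(x)\le 2\,Mw(x)$ follows pointwise a.e.\ by taking the supremum over cubes $Q\ni x$. The guiding idea is then to select $r$ and $\theta$ so that $r/\theta=s_{0}$, reducing the maximal function on the right-hand side of Theorem~\ref{thm:main1} to $Mw$ and leaving only the prefactor $(r')^{\theta/p'}/(1-\theta)$ to estimate.

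Concretely, I would set $r:=1+\delta$ with $\delta:=\frac{1}{2\tau_{n}[w]_{A_{\infty}}}$ and $\theta:=r/s_{0}$, so that $\theta\in(0,1)$ and $r/\theta=s_{0}$. A direct computation yields $r'\asymp [w]_{A_{\infty}}$ and $1-\theta\asymp 1/[w]_{A_{\infty}}$. Since $\theta\le 1$, the prefactor in \eqref{eq:desigr} is then dominated by
$$\frac{(r')^{\theta/p'}}{1-\theta}\le \frac{(r')^{1/p'}}{1-\theta}\lesssim [w]_{A_{\infty}}^{1/p'}\cdot [w]_{A_{\infty}}=[w]_{A_{\infty}}^{1+1/p'},$$
which is exactly the claimed growth. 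For \eqref{eq:desigrComm} the exponent on $r'$ is $\theta(1+1/p')$ instead of $\theta/p'$, so the same parameter choice produces $[w]_{A_{\infty}}^{(1+1/p')+1}=[w]_{A_{\infty}}^{2+1/p'}$ and yields the commutator bound.

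The $A_{1}$ parts then follow from the $A_{\infty}$ versions simply by inserting the defining pointwise inequality $Mw\le [w]_{A_{1}}w$ a.e., which gives $\|f\|_{L^{p}(Mw)}\le [w]_{A_{1}}^{1/p}\|f\|_{L^{p}(w)}$, together with the inclusion $A_{1}\subset A_{\infty}$ that licenses applying the previous step. The main obstacle is nothing deeper than this parameter optimization: the exponents $1+1/p'$ and $2+1/p'$ are forced by the tight balance between the sharp reverse H\"older exponent (controlling how close to $1$ one may push $r/\theta$) and the blow-up of $1/(1-\theta)$, and any looser choice of $\delta$ or $\theta$ would produce a strictly worse power of $[w]_{A_{\infty}}$.
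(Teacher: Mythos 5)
Your proposal is correct and follows essentially the same route as the paper: apply Theorem~\ref{thm:main1} with $r/\theta$ set equal to the sharp reverse H\"older exponent $s_w=1+\frac{1}{\tau_n[w]_{A_\infty}}$ so that $M_{r/\theta}(w)\lesssim Mw$, choose $\theta$ so that $1/(1-\theta)\asymp r'\asymp[w]_{A_\infty}$, and then pass to $A_1$ via $Mw\le[w]_{A_1}w$. The paper parametrizes via a free $s$ and picks $\theta=(2s')'/s$ before specializing $s=s_w$, while you plug in $s_w$ directly and split it as $r=1+\delta$, $\theta=r/s_w$; this is only a cosmetic reparametrization of the identical optimization.
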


Finally a direct application of \cite[Corollary 4.3]{Duo-JFA} leads us to the following result.

\begin{corollary} \label{cor2}
Let $1\le q<p<\infty$. Then we have
$$
\|T_\Omega \|_{L^p(w)}  \le c_{n,p,q}\|\Omega\|_{L^\infty}\,[w]_{A_q}^{2}
 \|f\|_{L^p(w)}
 $$
 and
 $$
\|[b,T_{\Omega}]f\| _{L^{p}(w)}\leq c_{n,p,q}\|b\|_{\BMO}\|\Omega\|_{L^{\infty}}[w]_{A_q}^{3}\|f\|_{L^{p}(w)}
$$
for all $w\in A_q$.
\end{corollary}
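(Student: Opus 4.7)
The plan is to deduce Corollary \ref{cor2} from Corollary \ref{cor1} by invoking the sharp extrapolation theorem of Duoandikoetxea, \cite[Corollary 4.3]{Duo-JFA}. The strategy has two clean steps: first collapse the mixed $A_1$--$A_\infty$ bounds of Corollary \ref{cor1} into pure $A_1$ bounds with explicit power growth, and then extrapolate them up to $A_q$ bounds for $1\le q<p$.

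For the first step, recall that every $w\in A_1$ satisfies $[w]_{A_\infty}\le c_n [w]_{A_1}$. Inserting this into the $A_1$--$A_\infty$ part of Corollary \ref{cor1} yields, for every $1<p<\infty$ and every $w\in A_1$,
$$
\|T_\Omega f\|_{L^p(w)} \le c_{n,p}\|\Omega\|_{L^\infty}\, [w]_{A_1}^{\frac{1}{p}+1+\frac{1}{p'}} \|f\|_{L^p(w)} = c_{n,p}\|\Omega\|_{L^\infty}\, [w]_{A_1}^{2} \|f\|_{L^p(w)},
$$
and analogously
$$
\|[b,T_\Omega]f\|_{L^p(w)} \le c_{n,p}\|b\|_{\BMO}\|\Omega\|_{L^\infty}\, [w]_{A_1}^{\frac{1}{p}+2+\frac{1}{p'}} \|f\|_{L^p(w)} = c_{n,p}\|b\|_{\BMO}\|\Omega\|_{L^\infty}\, [w]_{A_1}^{3} \|f\|_{L^p(w)}.
$$
The essential observation here is that the exponents $\frac{1}{p}+1+\frac{1}{p'}=2$ and $\frac{1}{p}+2+\frac{1}{p'}=3$ are independent of $p$, which is precisely what will allow the second step to produce clean, $p$-free exponents on the $A_q$ characteristic.

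For the second step, we feed these $A_1$ estimates into \cite[Corollary 4.3]{Duo-JFA}: the sharp extrapolation theorem takes an $A_1$ bound with power growth $\phi(t)=t^\alpha$ at some fixed Lebesgue exponent and produces, for every $1\le q<p<\infty$ and every $w\in A_q$, an $A_q$ estimate of the form $C_{n,p,q}[w]_{A_q}^{\alpha}$ at the $L^p(w)$ level. Applying this with $\alpha=2$ yields the first inequality of Corollary \ref{cor2}, and with $\alpha=3$ the second. The only point worth checking is that Duoandikoetxea's extrapolation indeed preserves the power exponent when one moves from $A_1$ to $A_q$; this is precisely the quantitative content of \cite[Corollary 4.3]{Duo-JFA}, so beyond a careful citation no additional work is required, and no new obstacle arises.
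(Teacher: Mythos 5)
Your argument is correct and matches the paper's approach, which is simply a direct invocation of Duoandikoetxea's \cite[Corollary 4.3]{Duo-JFA}. You have usefully made explicit the intermediate step the paper leaves implicit — collapsing the mixed $A_1$--$A_\infty$ bound of Corollary~\ref{cor1} to a pure $A_1$ bound via $[w]_{A_\infty}\le c_n[w]_{A_1}$ and noting that the resulting exponents $\tfrac1p+1+\tfrac1{p'}=2$ and $\tfrac1p+2+\tfrac1{p'}=3$ are $p$-independent, which is exactly what lets the extrapolation produce the clean powers of $[w]_{A_q}$.
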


The rest of the paper is organized as follows. In Section \ref{PrelimNotation} we present basic definitions and properties and fix some notation. In Section \ref{Lemmata} we prove all the needed intermediate results. The proofs of Theorem~\ref{thm:main1} and Corollary \ref{cor1} are given in Section \ref{PruebasThmCor}.

\section{Preliminaries and notation}
\label{PrelimNotation}
The purpose of this section is to gather some definitions and basic properties that will be used throughout the rest of the paper.

\subsection{$\omega$-Calder\'on--Zygmund operators and commutators}

In Subsection \ref{sub:deco} it will be shown that $T_\Omega$ can be decomposed as a sum of $\omega$-Calder\'on--Zygmund operators with kernels that are \textit{Dini-continuous}. This decomposition is more precise than the one considered previously as in \cite{W}. For this purpose we recall in the present subsection the concept of $\omega$-Calder\'on--Zygmund operator. These are bounded linear operators on $L^2(\mathbb{R}^n)$ that admit the following representation
\[
Tf(x)=\int_{\mathbb{R}^n}K(x,y)f(y)\,dy
\]
for every smooth compactly supported function, provided that $x\not\in \supp f$, and the kernel $K$ satisfies the following properties
\begin{itemize}
\item (Size condition)
$$
|K(x,y)|\leq \frac{C_K}{|x-y|^n}.
$$
\item(Smoothness estimate) If $|x-y|>2|x-x'|$ then
\[
|K(x,y)-K(x',y)|+|K(y,x)-K(y,x')|\leq \omega\bigg(\frac{|x-x'|}{|x-y|}\bigg)\frac{1}{|x-y|^n}
\]
\end{itemize}
where $\omega:[0,\infty)\rightarrow[0,\infty)$ is a modulus of continuity, that is, an increasing subadditive ($\omega(s+t)\leq \omega(s)+\omega(t)$) function such that $\omega(0)=0$. Moreover, $\omega$ satisfies the \textit{Dini condition} if
\begin{equation}
\label{eq:Dini}
\|\omega\|_{\Dini}:=\int_0^1\omega(t)\frac{dt}{t}<\infty.
\end{equation}
In this case, the kernel $K$ is said to be a \textit{Dini-continuous kernel}.
For $\omega$-Calder\'on--Zygmund operators with $\omega$ satisfying the Dini condition, we will adopt the notation
\begin{equation}
\label{eq:CT}
C_T=\|T\|_{L^2\to L^2}+C_K+\|\omega\|_{\Dini}.
\end{equation}

In the last few years the study of sharp quantitative weighted inequalities for Calder\'on--Zygmund operators has led to a such a surprising result as the pointwise domination of $\omega$-Calder\'on--Zygmund operators by finite sums of sparse operators, see the definitions some lines below. First Lerner and F. Nazarov \cite{LN} and simultaneously J. M. Conde-Alonso and G. Rey \cite{CAR} obtained pointwise domination results in the case that $\omega$ satisfies the \textit{$\log$-Dini} condition, that is
\[
\int_0^1\omega(t)\log \frac{1}{t}\frac{dt}{t}<\infty.
\]
Recently M. T. Lacey \cite{La} relaxed that condition by allowing $\omega$ to satisfy the Dini-condition. After that, Hyt\"onen, the third author, and Tapiola \cite{HRT} obtained a fully quantitative result (see also \cite{L} for a short and elegant proof of the result).

\begin{theorem}[\cite{HRT,L}]
\label{thm:domination}
Let $T$ be an $\omega$-Calder\'on--Zygmund operator such that $\omega$ satisfies the Dini condition \eqref{eq:Dini}. Then for any compactly supported function $f\in L^1(\R^n)$ there exist $3^n$ dyadic lattices $\D_j$ and sparse collections $\mathscr{S}_j \subseteq \D_j$ such that
\begin{equation*}
  |Tf(x)| \ \le c_n C_T\sum_{j=1}^{3^n} \mathcal{A}_{ \mathscr{S}_j}f(x)
\end{equation*}
for almost every $x \in \R^n$, with $C_T$ as in \eqref{eq:CT}
and where
\[\mathcal{A}_\mathscr{S_j}f(x)=\sum_{Q\in\mathscr{S}_j}|f|_{Q}\chi_{Q}(x).
\]
\end{theorem}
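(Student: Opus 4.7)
The proof I propose follows the scheme of A.\ K.\ Lerner refined quantitatively by Hyt\"onen--Roncal--Tapiola. The central object is the grand maximal truncation
\[
\mathcal{M}_T f(x):=\sup_{Q\ni x}\esssup_{\xi\in Q}\big|T\big(f\chi_{\R^n\setminus 3Q}\big)(\xi)\big|,
\]
and the main technical input is the weak-type estimate
\[
\|\mathcal{M}_T f\|_{L^{1,\infty}(\R^n)}\le c_n C_T\,\|f\|_{L^1(\R^n)}.
\]
To prove it, one splits the truncation at scale $\ell(Q)$ into the maximal CZ truncation $T_\ast f$ plus an error controlled by a telescoping sum of differences $K(x,y)-K(x_Q,y)$, and the smoothness estimate bounds this error by $\sum_{k\ge 0}\omega(2^{-k})$, which is finite exactly by the Dini condition \eqref{eq:Dini}. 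Combining this with the weak-$(1,1)$ bound for $T$ itself (classical consequence of CZ theory, quantified by $C_T$) and the Hardy--Littlewood maximal function yields the desired bound with constant $c_nC_T$.

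With $\mathcal{M}_T$ in hand, I would build the sparse family by the standard stopping-time construction. Fix a dyadic cube $Q_0$ containing $\supp f$ and define
\[
E_{Q_0}:=\Big\{x\in Q_0:\,|f(x)|>\alpha_n |f|_{3Q_0}\Big\}\cup\Big\{x\in Q_0:\,\mathcal{M}_T\big(f\chi_{3Q_0}\big)(x)>\beta_n C_T |f|_{3Q_0}\Big\}.
\]
Choosing $\alpha_n,\beta_n$ large enough, the weak-$(1,1)$ bounds for $M$ and $\mathcal{M}_T$ give $|E_{Q_0}|\le\tfrac12|Q_0|$. The maximal dyadic subcubes $\{P_j\}\subset Q_0$ covering $E_{Q_0}$ then satisfy $\sum_j|P_j|\le\tfrac12|Q_0|$, so the forthcoming family will be sparse in the usual sense. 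On the complement $Q_0\setminus\bigcup_j P_j$, the definition of $E_{Q_0}$ yields the pointwise bound
\[
\big|T\big(f\chi_{3Q_0}\big)(x)\big|\le c_n C_T\,|f|_{3Q_0}\qquad\text{a.e.}
\]
Iterating the procedure inside each $P_j$ (with $f\chi_{3P_j}$ in place of $f\chi_{3Q_0}$) produces a sparse family $\Ss_{Q_0}$ together with the pointwise domination
\[
\big|T\big(f\chi_{3Q_0}\big)(x)\big|\le c_n C_T\sum_{Q\in\Ss_{Q_0}}|f|_{3Q}\,\chi_Q(x).
\]

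The final step is to remove the $3Q$ and to cover all of $\R^n$. For this I would apply the \emph{three-lattice trick}: there exist $3^n$ dyadic systems $\D_1,\dots,\D_{3^n}$ such that for every cube $Q\subset\R^n$ one can find $j$ and $\widetilde Q\in\D_j$ with $3Q\subset\widetilde Q$ and $\ell(\widetilde Q)\le 3\ell(Q)$. Running the above construction along each lattice $\D_j$ and replacing each average $|f|_{3Q}$ by $3^n|f|_{\widetilde Q}$ yields the $3^n$ sparse families $\Ss_j\subset\D_j$ as in the statement. A covering of $\R^n$ by dyadic cubes of increasing size plus a standard limiting argument (using that $f\in L^1$ is compactly supported) extends the domination from an initial $Q_0\supset\supp f$ to all of $\R^n$.

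The main obstacle, and the only place where the Dini hypothesis is genuinely used in a quantitative way, is the weak $L^1$ bound for $\mathcal{M}_T$ with constant $c_nC_T$; the iteration and the three-lattice step are essentially geometric and produce only dimensional constants. One has to be careful to set up $\alpha_n,\beta_n$ so that $\|\omega\|_{\Dini}$ enters the final bound only through the single factor $C_T$ sitting in front of the sparse sum.
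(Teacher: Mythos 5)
Your proposal follows the scheme of Lerner's short proof in the cited reference [L]; note that the paper itself does not reprove this theorem but imports it from [HRT, L]. The overall architecture---the grand maximal truncation $\mathcal{M}_T$, its weak $L^1$ bound with constant $c_nC_T$, the Calder\'on--Zygmund stopping time on a big cube $Q_0$, iteration to a $3Q$-sparse family, and the three-lattice conversion---is exactly the right framework.

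There is, however, a gap at the step where you assert that off the exceptional set $E_{Q_0}$ one has $|T(f\chi_{3Q_0})(x)|\le c_n C_T\,|f|_{3Q_0}$. Being outside $E_{Q_0}$ only gives $|f(x)|\le\alpha_n|f|_{3Q_0}$ and $\mathcal{M}_T(f\chi_{3Q_0})(x)\le\beta_n C_T|f|_{3Q_0}$; neither of these by itself controls $|T(f\chi_{3Q_0})(x)|$. One needs the separate pointwise lemma that for a.e.\ $x\in Q_0$
\begin{equation*}
|T(f\chi_{3Q_0})(x)|\le c_n\,\|T\|_{L^1\to L^{1,\infty}}\,|f(x)|+\mathcal{M}_T(f\chi_{3Q_0})(x),
\end{equation*}
proved by writing, for a small cube $Q\ni x$, $T(f\chi_{3Q_0})=T(f\chi_{3Q})+T(f\chi_{3Q_0\setminus 3Q})$, bounding the second term by $\mathcal{M}_T(f\chi_{3Q_0})(x)$, picking $\xi\in Q$ on a set of at least half measure with $|T(f\chi_{3Q})(\xi)|\le c_n\|T\|_{L^1\to L^{1,\infty}}|f|_{3Q}$, and letting $Q$ shrink to $x$ via Lebesgue differentiation. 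This is Lemma 3.2 in Lerner's paper, and it is where the weak-$(1,1)$ bound for $T$ enters a second time beyond its role inside the estimate for $\mathcal{M}_T$ via the truncated maximal operator $T_\ast$. With this lemma inserted your outline is complete. A minor point: the dyadic cube $\widetilde Q\in\D_j$ containing $3Q$ furnished by the three-lattice lemma satisfies $\ell(\widetilde Q)\le 9\ell(Q)$, not $3\ell(Q)$, since one applies the lemma to $3Q$; the replacement factor $3^n$ in passing from $|f|_{3Q}$ to $|f|_{\widetilde Q}$ is still correct.
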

In the theorem above and in what follows, the notation $f_Q$ stands for the average of a function $f$ over a cube $Q$:
$$
f_Q=\frac{1}{|Q|}\int_Qf(x)\,dx.
$$
We also recall that a $\eta$-sparse family $(0<\eta<1)$ is a family of cubes $\mathscr{S}$ contained in a dyadic lattice $\mathscr{D}$ such that for each $Q\in\mathscr{S}$ there exists a subset $E_Q\subseteq Q$ such that
\begin{enumerate}
\item $\eta|Q|\leq|E_Q|$;
\item the $E_Q$'s are pairwise disjoint.
\end{enumerate}
To learn about the notion of dyadic lattice and for a thorough study of sparse families we encourage the reader to consult \cite{LN}.

For commutators of Calder\'on--Zygmund operators whose modulus of continuity satisfies the Dini condition \eqref{eq:Dini}, the following sparse control has been very recently obtained in \cite{LORR}.

\begin{theorem}[\cite{LORR}]
\label{thm:ConmDomination}
Let $T$ be an $\omega$-Calder\'on--Zygmund operator such that $\omega$ satisfies the Dini condition \eqref{eq:Dini}, and let $b\in L_{\loc}^1(\R^n)$. Then for any compactly supported function $f\in L^\infty(\R^n)$ there exist $3^n$  dyadic lattices $\D_j$ and sparse collections $\mathscr{S}_j \subseteq \D_j$ such that
\begin{equation*}
  \big|[b,T]f(x)\big| \ \leq c_{n}C_{T}\sum_{j=1}^{3^{n}}\big(\mathcal{T}_{\mathscr{S}_{j},b}|f|(x)+\mathcal{T}_{\mathscr{S}_{j},b}^{*}|f|(x)\big)
\end{equation*}
for almost every $x \in \R^n$, with $C_T$ as in \eqref{eq:CT} and where
\[
\mathcal{T}_{\mathscr{S}_{j},b}|f|(x)=\sum_{Q\in\mathscr{S}_{j}}|b(x)-b_{Q}||f|_{Q}\chi_{Q}(x)
\]
and
\[
\mathcal{T}_{\mathscr{S}_{j},b}^{*}|f|(x)=\sum_{Q\in\mathscr{S}_{j}}\left(\frac{1}{|Q|}\int_{Q}|b(y)-b_{Q}|f(y)dy\right)\chi_{Q}(x).
\]
\end{theorem}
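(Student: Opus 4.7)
The plan is to derive the sparse control of $[b,T]$ by adapting the stopping-time construction that produces the sparse domination of $T$ itself (Theorem~\ref{thm:domination}), organised around the fundamental algebraic identity
\[
[b,T](f\chi_{3Q})(x) = (b(x) - b_Q)\, T(f\chi_{3Q})(x) - T\big((b - b_Q)f\chi_{3Q}\big)(x).
\]
The first summand on the right will generate the sparse operator $\mathcal{T}_{\mathscr{S},b}$, where $b(x)$ is evaluated pointwise, while the second will generate $\mathcal{T}_{\mathscr{S},b}^{*}$, where $b$ is integrated against the test function. By the $3^n$-lattice trick of Lerner-Nazarov, it suffices to prove a local version: for each cube $Q_0$ in a fixed dyadic lattice $\D_j$ with $\supp f \subset 3Q_0$, construct a sparse family $\mathscr{S}_j \subset \D_j$ with $Q_0 \in \mathscr{S}_j$ such that for a.e.\ $x \in Q_0$
\[
|[b,T](f\chi_{3Q_0})(x)| \le c_n C_T \big(\mathcal{T}_{\mathscr{S}_j,b}|f|(x) + \mathcal{T}_{\mathscr{S}_j,b}^{*}|f|(x)\big).
\]

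The sparse family $\mathscr{S}_j$ would be built iteratively via a one-cube stopping lemma. Given $Q \in \mathscr{S}_j$, take $\children(Q)$ to be the maximal dyadic subcubes $P \subset Q$ contained in the exceptional set where at least one of the averages $|f|_P$ and $\tfrac{1}{|P|}\int_P |b - b_Q||f|$, or the values on $P$ of the grand maximal truncations of $T$ applied to $f\chi_{3Q}$ and to $(b - b_Q)f\chi_{3Q}$, exceeds a large constant multiple of its natural $Q$-average. The Hardy-Littlewood weak-type inequality together with the weak-$L^1$ estimate for the grand maximal truncation of a Dini-continuous Calder\'on-Zygmund operator (which underlies Theorem~\ref{thm:domination}) guarantee $\sum_P |P| \le \tfrac12 |Q|$, yielding a $\tfrac12$-sparse family. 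On the good set $Q \setminus \bigcup \children(Q)$, the algebraic identity together with the pointwise control of $T(f\chi_{3Q})(x)$ and of $T((b - b_Q)f\chi_{3Q})(x)$ by their $Q$-averages delivers precisely the contributions $|b(x) - b_Q||f|_{3Q}$ and $\langle |b - b_Q||f|\rangle_{3Q}$, which feed, respectively, $\mathcal{T}_{\mathscr{S}_j,b}$ and $\mathcal{T}_{\mathscr{S}_j,b}^{*}$.

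The main obstacle is closing the recursion. Inside each stopping cube $P_k$ one must rewrite $[b,T](f\chi_{3Q})(x)$ as $[b,T](f\chi_{3P_k})(x)$ plus terms already controlled by the partially built sparse sum. This requires handling carefully the far-field piece $[b,T](f\chi_{3Q \setminus 3P_k})(x)$ for $x \in P_k$: its $T$-component is tamed using the Dini smoothness of the kernel (this is where $\|\omega\|_{\Dini}$ feeds into $C_T$), while the telescoping term $(b_Q - b_{P_k}) T(f\chi_{3Q})(x)$ that arises when switching the pivot from $b_Q$ to $b_{P_k}$ in the identity must be absorbed into later generations of the sparse sum. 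This absorption is the delicate combinatorial step and is what dictates the appearance of both sparse operators $\mathcal{T}_{\mathscr{S}_j,b}$ and $\mathcal{T}_{\mathscr{S}_j,b}^{*}$ rather than a single one in the conclusion.
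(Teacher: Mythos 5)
The paper does not prove this theorem; it is quoted from \cite{LORR}, so the comparison must be with the argument given there. Your high-level plan --- the pivoted identity $[b,T]g=(b-b_Q)Tg-T((b-b_Q)g)$, the Lerner--Nazarov $3^n$-lattice reduction, and a stopping-time construction driven by weak $(1,1)$ bounds for the grand maximal truncations of both $T(f\chi_{3Q})$ and $T((b-b_Q)f\chi_{3Q})$ --- is the right architecture and matches \cite{LORR} in all of these respects.

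The flaw is in your reading of the ``closing the recursion'' step. You single out a ``telescoping term $(b_Q-b_{P_k})T(f\chi_{3Q})(x)$ that arises when switching the pivot from $b_Q$ to $b_{P_k}$'' and propose to absorb it across later generations; this is a false difficulty. The identity $[b,T]g=(b-c)Tg-T((b-c)g)$ is exactly independent of the constant $c$ (the two terms $cTg$ cancel algebraically), so replacing $b_Q$ by $b_{P_k}$ produces no residual term at all. In the actual argument you never switch pivots inside a single step: on cube $Q$, for $x$ in a stopping child $P_k$, the far-field piece $[b,T](f\chi_{3Q\setminus 3P_k})(x)$ is expanded with pivot $b_Q$ and is bounded, via the stopping conditions for $Q$, by $|b(x)-b_Q|\,|f|_{3Q}+\langle|b-b_Q|\,|f|\rangle_{3Q}$; the near-field piece $[b,T](f\chi_{3P_k})(x)$ is a fresh, self-contained commutator handled by recursion on $P_k$ with its own pivot $b_{P_k}$. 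The recursion closes with no absorption whatsoever. As a consequence, your explanation for the presence of both $\mathcal{T}_{\mathscr{S},b}$ and $\mathcal{T}_{\mathscr{S},b}^{*}$ is also off: the two operators come from the two halves of the single-scale identity (the pointwise factor $(b(x)-b_Q)T(f\chi_{3Q})$ yields $\mathcal{T}_{\mathscr{S},b}$, the integrated term $T((b-b_Q)f\chi_{3Q})$ yields $\mathcal{T}_{\mathscr{S},b}^{*}$), not from any cross-generation bookkeeping.
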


\subsection{A decomposition of homogeneous singular integrals in terms of $\omega$-Calder\'on--Zygmund operators}
\label{sub:deco}

Rough homogeneous singular integrals can be decomposed as a sum of $\omega$-Calder\'on--Zygmund operators. The idea of decomposing $T_\Omega$ as a sum of operators with some cancellation property goes back to \cite{Duo-Trans,DuoRdF,W} and has been refined recently in \cite{HRT}.  Indeed, following \cite[Section 3]{HRT}, we consider the following partition of unity. Let $\phi\in C_c^{\infty}(\R^d)$ be such that $\supp \phi\subset\{x:|x|\le \frac{1}{100}\}$  and $\int\phi\,dx=1$, and so that $\widehat{\phi}\in \mathcal{S}(\R^d)$. Let us also define $\psi$ by $\widehat{\psi}(\xi)=\widehat{\phi}(\xi)-\widehat{\phi}(2\xi)$. Then, with this choice of $\psi$, it follows that $\int\psi\,dx=0$. We write
$$\phi_j(x)=\frac{1}{2^{jn}}\phi\Big(\frac{x}{2^j}\Big) \quad \text{and}\quad  \psi_j(x)=\frac{1}{2^{jn}}\psi\Big(\frac{x}{2^j}\Big),
$$
and define the partial sum operators $S_j$ by $S_j(f)=f\ast \phi_j$.
Since $S_j f\to 0$ as $j\to-\infty$, for any sequence of integer numbers $\{N(j)\}_{j=0}^{\infty}$, with
$$
0=N(0)<N(1)<\cdots<N(j)\to\infty,
$$
we have the identity
$$
T_k=T_kS_k+\sum_{j=1}^{\infty}T_k(S_{k-N(j)}-S_{k-N(j-1)}).
$$
In this way, we can write
\begin{equation}\label{SumaTj}
T_{\Omega}=\sum_{j=0}^{\infty}\widetilde{T}_j^N,
\end{equation}
where
\begin{equation*}
\widetilde{T}_0^N:=\sum_{k\in\Z}T_kS_k,
\end{equation*}
and, for $j\ge1$,
\begin{equation*}
  \widetilde{T}_j^N :=\sum_{k\in\Z}T_k(S_{k-N(j)}-S_{k-N(j-1)}).
\end{equation*}
Each operator $\widetilde{T}_j^N$ is an $\omega$-Calder\'on--Zygmund operator with Dini-continuous kernel, as stated in the following lemma, (see \cite[Lemma 3.10]{HRT}, also for the notation).

\begin{lemma}[\cite{HRT}]
\label{lem:modulus}
The operator $\widetilde{T}_j^N$ is an $\omega_j^N$-Calder\'on--Zygmund operator with
\begin{equation*}
\omega_j^N(t)\le  c_n\|\Omega\|_{L^\infty} \min(1 , 2^{N(j)}t),
\end{equation*}
which satisfies
\begin{equation*}
\|\omega_j^N\|_{\Dini}:=   \int_0^1\omega_j^N(t)\frac{dt}{t}\leq c_n\|\Omega\|_{L^\infty}(1+N(j)).
\end{equation*}
\end{lemma}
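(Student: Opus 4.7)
The plan is to analyze directly the convolution kernel
\[
\widetilde{K}_j^N=\sum_{k\in\Z}K_k\ast\Phi_{j,k},\qquad \Phi_{j,k}:=\phi_{k-N(j)}-\phi_{k-N(j-1)},
\]
of $\widetilde{T}_j^N$ for $j\ge 1$, where $K_k$ denotes the kernel of $T_k$, which is supported in $\{|x|\sim 2^k\}$ and satisfies $\|K_k\|_{L^\infty}\le c_n\|\Omega\|_{L^\infty}2^{-kn}$. Since $\phi_{k-N(i)}$ is supported in a ball of radius $\le 2^{k-N(i)}/100\ll 2^k$, the convolution $K_k\ast\Phi_{j,k}$ is again supported in $\{|x|\sim 2^k\}$, and for any fixed $u\in\R^n$ only $O(1)$ indices $k$ (those with $2^k\sim|u|$) contribute. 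The pointwise bound $|K_k\ast\Phi_{j,k}(u)|\le\|K_k\|_{L^\infty}\|\Phi_{j,k}\|_{L^1}\le c_n\|\Omega\|_{L^\infty}2^{-kn}$ yields the size estimate $|\widetilde{K}_j^N(u)|\le c_n\|\Omega\|_{L^\infty}|u|^{-n}$, which in particular gives the trivial bound $\omega_j^N(t)\le c_n\|\Omega\|_{L^\infty}$.

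The smoothness gain is produced by transferring a derivative to the Schwartz factor and pairing $L^\infty$ with $L^1$ in Young's inequality:
\[
\|\nabla(K_k\ast\phi_l)\|_{L^\infty}=\|K_k\ast\nabla\phi_l\|_{L^\infty}\le\|K_k\|_{L^\infty}\|\nabla\phi_l\|_{L^1}\le c_n\|\Omega\|_{L^\infty}2^{-kn}\cdot 2^{-l}.
\]
The alternative $L^1$--$L^\infty$ Young pairing produces $2^{-l(n+1)}\|\Omega\|_{L^\infty}$, which is worse by the factor $2^{(k-l)n}$, so the choice above is essential. Applying this to both summands of $\Phi_{j,k}$ and keeping the dominant (finer) scale $l=k-N(j)$ gives
\[
\|\nabla(K_k\ast\Phi_{j,k})\|_{L^\infty}\le c_n\|\Omega\|_{L^\infty}2^{-k(n+1)}2^{N(j)}.
\]
For $|x-x'|\le\tfrac{1}{2}|x-y|$, the mean value theorem together with the fact that only $2^k\sim|x-y|$ contribute yields
\[
|\widetilde{K}_j^N(x-y)-\widetilde{K}_j^N(x'-y)|\le c_n\|\Omega\|_{L^\infty}\frac{1}{|x-y|^n}\cdot 2^{N(j)}\frac{|x-x'|}{|x-y|},
\]
that is $\omega_j^N(t)\le c_n\|\Omega\|_{L^\infty}2^{N(j)}t$. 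Combining with the trivial bound delivers $\omega_j^N(t)\le c_n\|\Omega\|_{L^\infty}\min(1,2^{N(j)}t)$; the symmetric $y$-variable estimate is identical, while the $L^2$-boundedness of $\widetilde{T}_j^N$ is a separate input obtained via Plancherel from standard bounds on $\widehat{K}_k$ as in \cite{DuoRdF,HRT}.

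For the Dini norm, a direct computation splits the integral at $t=2^{-N(j)}$:
\[
\int_0^1\omega_j^N(t)\,\frac{dt}{t}\le c_n\|\Omega\|_{L^\infty}\Big[\int_0^{2^{-N(j)}}2^{N(j)}\,dt+\int_{2^{-N(j)}}^1\frac{dt}{t}\Big]=c_n\|\Omega\|_{L^\infty}(1+N(j)\log 2).
\]
The main obstacle I would expect is the choice of pairing in Young's inequality for the smoothness estimate: the naive $L^1$--$L^\infty$ pairing incurs a loss of $2^{nN(j)}$ that would destroy the linear-in-$N(j)$ growth of the Dini norm, and one must exploit that $K_k$ is bounded on a shell of area $\sim 2^{k(n-1)}$ while $\nabla\phi_l$ has $L^1$ mass $\sim 2^{-l}$ concentrated on the finer scale.
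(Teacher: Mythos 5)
Your proof is correct and follows essentially the same direct kernel computation as in \cite{HRT} (Lemma 3.10), which the present paper quotes without reproducing the argument: you pair $K_k\in L^\infty$ with $\nabla\phi_l\in L^1$ in Young's inequality, and then the mean value theorem together with the $O(1)$-overlap of the dyadic supports gives the pointwise modulus of continuity, after which splitting the Dini integral at $t=2^{-N(j)}$ finishes. One small correction to your closing paragraph, however: the claim that the ``naive'' $L^1$--$L^\infty$ pairing would ``destroy the linear-in-$N(j)$ growth of the Dini norm'' is not accurate. That pairing gives $\omega_j^N(t)\le c_n\|\Omega\|_{L^\infty}\min\big(1,2^{(n+1)N(j)}t\big)$, and the resulting Dini norm is still bounded by $c_n(n+1)\big(1+N(j)\big)\|\Omega\|_{L^\infty}$, which is still linear in $N(j)$, merely with a worse dimensional constant. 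The sharper $L^\infty$--$L^1$ pairing is needed for the precise pointwise form $\min(1,2^{N(j)}t)$ asserted in the lemma, not to preserve linearity of the Dini bound. Finally, recall that ``being an $\omega_j^N$-Calder\'on--Zygmund operator'' includes $L^2$-boundedness; you correctly defer that to Plancherel/Fourier transform estimates, and it is that step, rather than the kernel smoothness you computed (which uses no cancellation), that relies on the mean-zero cancellation built into the decomposition.
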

Analogously, using \eqref{SumaTj}, $[b,T_\Omega]$  can be decomposed as a sum of commutators of $b$ and $\omega^N_j$-Calder\'on--Zygmund operators as follows
\begin{equation*}
[b,T_{\Omega}]f(x)=\sum_{j=0}^{\infty}[b,\widetilde{T}_{j}^{N}]f(x).
\end{equation*}

\subsection{The Reverse H\"older Inequality (RHI)}
\label{sub:Ap}
As mentioned in the introduction the $A_{\infty}$ constant  was studied thoroughly in \cite{HP1}  and the definition \eqref{eq:infinity} was proved to be the most suitable one.
One of the key results of that work was the following optimal reverse H\"older's inequality (see also \cite{HPR1}).

\begin{theorem}[Reverse H\"older inequality] \label{thm:SharpRHI}

Let \,$w \in A_{\infty}$, then there exists a dimensional constant $\tau_{n}$ such that
\begin{equation*}
  \left(\frac{1}{|Q|}\int_{Q}w^{r_{w}}\right)^{\frac{1}{r_{w}}}\leq\frac{2}{|Q|}\int_{Q}w.
\end{equation*}
where
$$r_w=1+\frac{1}{\tau_{n} [w]_{A_{\infty} }}.
$$
\end{theorem}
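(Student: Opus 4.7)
The plan is to establish the sharp RHI as a quantitative self-improvement of the Fujii--Wilson definition~\eqref{eq:infinity}. Fix a cube $Q$ and set $r_{w}=1+\epsilon$, where $\epsilon>0$ is to be selected of order $1/(\tau_{n}[w]_{A_{\infty}})$. The only quantitative input from the $A_{\infty}$ hypothesis is the weak-type bound
\[
t\,\bigl|\{x\in Q:M^{d}_{Q}w(x)>t\}\bigr|\le\int_{Q}M^{d}_{Q}w\,dx\le\int_{Q}M(\chi_{Q}w)\,dx\le[w]_{A_{\infty}}w(Q),
\]
which follows from~\eqref{eq:infinity} together with Chebyshev's inequality, $M_{Q}^{d}$ being the dyadic maximal operator with respect to dyadic subcubes of $Q$.

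The core of the argument is an iterated Calder\'on--Zygmund / principal-cube decomposition of $w$ inside $Q$. I would start with $\mathcal{P}_{0}=\{Q\}$ and, for each $P\in\mathcal{P}_{k}$, define $\mathcal{P}_{k+1}|_{P}$ to be the collection of maximal dyadic subcubes $P'\subset P$ whose $w$-average exceeds a fixed multiple $\alpha$ of $w_{P}$. Three standard stopping-time properties then enter: first, $w(x)\le\alpha w_{P(x)}$ at a.e.\ Lebesgue point, where $P(x)$ is the smallest principal cube containing~$x$; second, $w_{P'}\le 2^{n}\alpha w_{P}$ for every $P'\in\mathcal{P}_{k+1}|_{P}$; and third, the weak-type bound above applied inside each $P$ yields the packing estimate $\bigl|\bigcup_{P'\in\mathcal{P}_{k+1}|_{P}}P'\bigr|\le([w]_{A_{\infty}}/\alpha)\,|P|$, so that $\bigl|\bigcup\mathcal{P}_{k}\bigr|\le([w]_{A_{\infty}}/\alpha)^{k}|Q|$ once $\alpha>[w]_{A_{\infty}}$.

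Partitioning $Q$ as the disjoint union of the sets $E_{P}:=P\setminus\bigcup_{P'\in\mathcal{P}_{k+1}|_{P}}P'$ indexed by $P\in\mathcal{P}=\bigcup_{k}\mathcal{P}_{k}$, I would estimate
\[
\int_{Q}w^{1+\epsilon}\,dx=\sum_{P\in\mathcal{P}}\int_{E_{P}}w^{1+\epsilon}\,dx\le\alpha^{\epsilon}\sum_{P\in\mathcal{P}}w_{P}^{\epsilon}w(P),
\]
and then group the sum by generation. Using $w_{P}\le(2^{n}\alpha)^{k}w_{Q}$ for $P\in\mathcal{P}_{k}$ together with the packing estimate reduces the inner sum over $\mathcal{P}_{k}$ to a geometric series in the ratio $(2^{n}\alpha)^{\epsilon}\cdot[w]_{A_{\infty}}/\alpha$. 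Choosing $\alpha$ of size $[w]_{A_{\infty}}$ and $\epsilon\le 1/(\tau_{n}[w]_{A_{\infty}})$ makes this ratio at most~$1/2$, and explicit bookkeeping yields $\int_{Q}w^{1+\epsilon}\le 2^{1+\epsilon}w_{Q}^{1+\epsilon}|Q|$, which is the claimed inequality after taking $(1+\epsilon)$-th roots.

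The main obstacle is the quantitative balance between the geometric growth factor $(2^{n}\alpha)^{\epsilon}$ coming from the Calder\'on--Zygmund blow-up of the $w$-averages across generations and the geometric decay factor $[w]_{A_{\infty}}/\alpha$ from the Fujii--Wilson condition: forcing the resulting series to be absolutely controlled by the constant~$2$ on the right-hand side is precisely what pins down the critical exponent $\epsilon$ to be of order $1/[w]_{A_{\infty}}$ and fixes the dimensional constant $\tau_{n}$.
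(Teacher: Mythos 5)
The paper does not itself prove Theorem \ref{thm:SharpRHI}; it imports the statement from \cite{HP1,HPR1}, so there is no internal proof to compare against. Your stopping-time set-up is sound: $w\le\alpha w_{P}$ a.e.\ on $E_P$, the controlled blow-up $\alpha w_P<w_{P'}\le 2^n\alpha w_P$ for children, the Lebesgue-measure packing $\bigl|\bigcup_{P'\in\mathcal{P}_{k+1}|_P}P'\bigr|\le([w]_{A_\infty}/\alpha)|P|$ from the Fujii--Wilson weak-type bound, and the reduction $\int_Q w^{1+\epsilon}\le\alpha^\epsilon\sum_P w_P^\epsilon\,w(P)$ are all correct. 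The gap is in the geometric-series step. Since $w(P)=w_P|P|$, combining $w_P\le(2^n\alpha)^k w_Q$ with the packing estimate gives
\[
\sum_{P\in\mathcal{P}_k}w_P^\epsilon\,w(P)=\sum_{P\in\mathcal{P}_k}w_P^{1+\epsilon}|P|\le\bigl[(2^n\alpha)^{k}w_Q\bigr]^{1+\epsilon}\Bigl(\frac{[w]_{A_\infty}}{\alpha}\Bigr)^{k}|Q|,
\]
so the generational ratio is $(2^n\alpha)^{1+\epsilon}[w]_{A_\infty}/\alpha=2^{n(1+\epsilon)}\alpha^\epsilon[w]_{A_\infty}\ge 2^n[w]_{A_\infty}>1$, not your claimed $(2^n\alpha)^\epsilon[w]_{A_\infty}/\alpha$: the average blow-up enters with exponent $1+\epsilon$, not $\epsilon$, because $w(P)$ itself carries one more power of $w_P$. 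The series therefore diverges for every choice of $\alpha$ and $\epsilon$. Your ratio would be correct only if one had a $w$-measure packing $\sum_{P\in\mathcal{P}_k}w(P)\lesssim([w]_{A_\infty}/\alpha)^k w(Q)$, which the Fujii--Wilson hypothesis does not supply; the trivial bound is only $\sum_{P\in\mathcal{P}_k}w(P)\le w(Q)$.

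Upgrading the Lebesgue-measure packing to a $w$-measure decay is precisely the $A_\infty$ absolute-continuity property, which is equivalent to the reverse H\"older inequality being proved, so this route is circular without a new ingredient. The mechanism in \cite{HP1,HPR1} is absorption rather than summation. Writing $\Omega_t=\{x\in Q:M^d_Q w(x)>t\}$ and $\varphi(\epsilon)=\frac{1}{|Q|}\int_Q(M^d_Q w)^\epsilon w$, the layer-cake formula with the Calder\'on--Zygmund bound $w(\Omega_t)\le 2^n t|\Omega_t|$ for $t\ge w_Q$ gives $\varphi(\epsilon)\le w_Q^{1+\epsilon}+\frac{2^n\epsilon}{1+\epsilon}\cdot\frac{1}{|Q|}\int_Q(M^d_Q w)^{1+\epsilon}$, while localizing the Fujii--Wilson condition to the Calder\'on--Zygmund cubes of $\Omega_t$ (where $M^d_Q w=M^d_{Q_j}w$) gives $\frac{1}{|Q|}\int_Q(M^d_Q w)^{1+\epsilon}\le[w]_{A_\infty}\varphi(\epsilon)$. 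Absorbing the second term for $\epsilon\le\frac{1}{2^{n+1}[w]_{A_\infty}-1}$ yields $\varphi(\epsilon)\le 2w_Q^{1+\epsilon}$, and since $w\le M^d_Q w$ a.e.\ the theorem follows with $\tau_n=2^{n+1}$. It is this absorption, not a generational geometric series, that produces the sharp $1/[w]_{A_\infty}$ scale in the exponent.
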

This sharp quantitative form of the RHI will be needed later.

\subsection{Orlicz maximal operators}

It can be easily checked that, for $1<r<\infty$, $Mf(x)\leq M_rf(x)$, where $M_r$ is the maximal operator defined in \eqref{DefMq}. Then one may wonder whether the standard Hardy--Littlewood operator could be generalized to a wider ``range of scales". For instance, whether it would be possible to build a maximal operator $\widetilde{M} $ such that
\begin{equation}\label{maximalScale}
cMf(x)\leq \widetilde M f(x) \leq c_r M_rf(x).
\end{equation}
Indeed, this can be done and for that purpose we need to build ``generalized local averages". That task can be adressed as follows. Let $\Psi$ be a Young function that is a continuous, convex, increasing function that satisfies $\Psi(0)=0$ and $\Psi(t)\to \infty$ as $t\to \infty$. Let $f$ be a measurable function defined on a set $E$ with finite measure of $\R^n$. We refer to \cite{PKJF} for more information. The \textit{$\Psi$-norm of $f$ over $E$} is defined by
$$
\|f\|_{\Psi(L),E}:=\inf\Big\{\lambda>0; \frac{1}{|E|}\int_{E}\Psi\Big(\frac{|f(x)|}{\lambda}\Big)\,dx\le 1\Big\}.
$$
If $\Psi$ is a Young function and $f$ is Lebesgue measurable, we define the \textit{Orlicz maximal operator} $M_{\Psi(L)}$ by
\begin{equation}\label{OrliczMax}
M_{\Psi(L)} f(x)=\sup_{Q\ni x}\|f\|_{\Psi(L),Q}.
\end{equation}
In the particular case $\Psi(t)=t^r$, $1\le r<\infty$ then $M_{\Psi(L)}$ coincides with the maximal operator defined in \eqref{DefMq},
$$
M_{\Psi(L)}f(x)=M_{L^r}f(x)=\sup_{Q\ni x} \Big(\frac{1}{|Q|}\int_Q|f|^r\,dt\Big)^{1/r},
$$
so in this case, we will still use an specific notation, denoting $M_{L^r}$ as $M_r$. Obviously, for $r=1$ we recover the usual Hardy--Littlewood maximal function.

Some particular interesting cases are the
Young functions
$$
\Phi(t)= t\,(1+\log^+t)^{\delta} \quad {\rm and}\quad \Psi(t)=e^t-1,
$$
defining the classical Zygmund spaces $L(\log L)$, in the case $\delta=1$,  and $\exp L$
respectively. The corresponding averages will be denoted by
$$
\|\cdot\|_{\Phi,Q} = \|\cdot\|_{L(\log L)^{\delta},Q}\quad {\rm and}
\quad\|\cdot\|_{\Psi,Q} = \|\cdot\|_{\exp L,Q}.
$$

It follows from the definition of norm that given two Young functions $\Psi(t)$ and $\Phi(t)$ such that for some $\kappa, c>0$ $\Psi(t)\leq \kappa\,\Phi(t)$ for every $t>c$, then
$\|f\|_{\Psi(L),Q} \leq (\Psi(c)+\kappa) \|f\|_{\Phi(L),Q}$ and hence $M_{\Psi(L)}f(x)\leq (\Psi(c)+\kappa)\, M_{\Phi(L)}f(x)$  for each $x$. In particular and since $(1+\log^{+}t ) \leq \frac{1}{\delta}t^{\delta}$, $\delta>0$, we have %
\begin{equation}
\label{eq:desig}
Mf(x)\leq M_{L(\log L)}f(x)\leq r'\,M_rf(x) \quad r>1,\quad x\in \R^n.
\end{equation}
This example fulfills our purpose in \eqref{maximalScale}.

Associated to each Young function $\Phi$,  one can define a
complementary function
\begin{equation}\label{complementaria}
\bar \Phi(s)=\sup_{t>0}\{st-\Phi(t)\}.
\end{equation}
Such $\bar \Phi$ is also a Young function and the
$\bar\Phi$-averages it defines are related to the
$L_{\Phi}$-averages  via a {\it generalized H\"older's
inequality}, namely,
\begin{equation*}
\frac1{|Q|}\,\int_{Q}|f(x)\,g(x)|\,dx \le
2\,\|f\|_{\Phi,Q}\,\|g\|_{\bar\Phi,Q}.
\end{equation*}
As a particular case the following holds
\[
\frac{1}{|Q|}\int_Q |b-b_Q||f|\leq c \|b-b_Q\|_{\exp L,Q}\|f\|_{L\log L,Q}.
\]
If we also know that $b\in \BMO$ then by John--Nirenberg Theorem (see for instance \cite[Theorem 6.11]{Duolibro} or \cite[Theorem 7.1.6]{G}) we have that
\begin{equation}\label{HGenBMOLlogL}
\frac{1}{|Q|}\int_Q |b-b_Q||f|\leq c_n \|b\|_{\BMO}\|f\|_{L\log L,Q}.
\end{equation}

For a more detailed account about the notions presented in this subsection we refer the reader to \cite{RR}.

\section{Lemmata}
\label{Lemmata}

\subsection{Unweighted $L^p$ estimates with good decay for $\widetilde{T}_j^N$ and $[b,\widetilde{T}_j^N]$}

For $\widetilde{T}^N_j$ a $L^p$ estimate with good decay was obtained in \cite[Lemma 3.14]{HRT}.
\begin{lemma}[\cite{HRT}]
\label{lem:Lpunweighted}
Let $1<p<\infty$. Then we have
\begin{equation*}
  \|\widetilde{T}_j^Nf\|_{L^p} \le c_{n,p}\|\Omega\|_{L^\infty}2^{-\alpha_p N(j-1)}\big(1+N(j)\big) \|f\|_{L^p},
\end{equation*}
for some numerical $0<\alpha_p<1$ independent of $T_{\Omega}$ and the function $N(\cdot)$.
\end{lemma}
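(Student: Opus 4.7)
The plan is to combine an $L^2$ estimate with exponential decay in $N(j-1)$ with an $L^p$ bound from Calder\'on--Zygmund theory that exhibits only linear growth in $N(j)$, and then to interpolate between them to obtain the claimed mixed behaviour.

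First I would establish an $L^2$ decay bound of the form
\begin{equation*}
\|\widetilde{T}_j^N f\|_{L^2} \le c_n \|\Omega\|_{L^\infty}\, 2^{-\alpha N(j-1)}\, \|f\|_{L^2}
\end{equation*}
for some absolute $\alpha\in(0,1)$. This is the heart of the matter and lives on the Fourier side. Each summand $T_k(S_{k-N(j)}-S_{k-N(j-1)})$ has a multiplier factoring as the symbol of $T_k$ times $\widehat{\phi}(2^{k-N(j)}\xi)-\widehat{\phi}(2^{k-N(j-1)}\xi)$; the latter is essentially supported on the annulus $2^{-k+N(j-1)}\lesssim|\xi|\lesssim 2^{-k+N(j)}$, while $T_k$ contributes a symbol concentrated near $|\xi|\sim 2^{-k}$ and with Fourier decay off this scale. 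Combining these with the $L^2$ almost-orthogonality of the pieces indexed by $k$ (Plancherel applied after reindexing), one obtains the quoted uniform-in-$k$ exponential rate; this is the classical mechanism of \cite{DuoRdF} quantified in \cite{HRT}.

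Next I would derive a crude $L^q$ bound for every $1<q<\infty$. By Lemma \ref{lem:modulus}, $\widetilde{T}_j^N$ is an $\omega_j^N$-Calder\'on--Zygmund operator whose Dini norm is at most $c_n\|\Omega\|_{L^\infty}(1+N(j))$ and whose $L^2$ operator norm is controlled by $\|\Omega\|_{L^\infty}$ (using the $L^2$ boundedness of $T_\Omega$ and the uniform boundedness of the family $\{S_k\}$). Standard Calder\'on--Zygmund theory then yields
\begin{equation*}
\|\widetilde{T}_j^N f\|_{L^q}\le c_{n,q}\|\Omega\|_{L^\infty}(1+N(j))\,\|f\|_{L^q}, \qquad 1<q<\infty.
\end{equation*}

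Finally, given $p\in(1,\infty)$, pick $q\in(1,\infty)$ such that $p$ lies strictly between $2$ and $q$, and apply Riesz--Thorin interpolation between the $L^2$ estimate of the first step and the $L^q$ estimate of the second step. With $\theta\in(0,1)$ determined by $1/p=(1-\theta)/2+\theta/q$, the result is
\begin{equation*}
\|\widetilde{T}_j^N f\|_{L^p}\le c_{n,p}\|\Omega\|_{L^\infty}\, 2^{-\alpha(1-\theta)N(j-1)}(1+N(j))^{\theta}\|f\|_{L^p},
\end{equation*}
and since $(1+N(j))^{\theta}\le 1+N(j)$, setting $\alpha_p:=\alpha(1-\theta)\in(0,1)$ gives the stated inequality.

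The main obstacle is the $L^2$ decay of the first step: the Calder\'on--Zygmund input and the Riesz--Thorin interpolation are routine, but extracting the uniform exponential rate $2^{-\alpha N(j-1)}$ requires a careful Fourier-analytic argument exploiting simultaneously the smoothness and cancellation of $\widehat{\phi}$, the frequency localization produced by the difference $S_{k-N(j)}-S_{k-N(j-1)}$, and the orthogonality across scales $k$. Fortunately, this exact estimate is already carried out in \cite{HRT}, so the proof can quote it rather than redo it.
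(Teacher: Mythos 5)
Since the paper merely cites \cite{HRT} for this lemma and gives no in-paper proof, there is nothing internal to compare against, but your reconstruction follows the same Duoandikoetxea--Rubio de Francia scheme that \cite{HRT} quantifies: an $L^2$ bound with exponential decay $2^{-\alpha N(j-1)}$ from the Fourier/Littlewood--Paley analysis, a Calder\'on--Zygmund $L^q$ bound growing like $1+N(j)$ via the Dini norm from Lemma~\ref{lem:modulus}, and Riesz--Thorin interpolation, which is precisely the mechanism that makes the exponent $\alpha_p=\alpha(1-\theta)$ depend on $p$. Two small caveats worth tightening. First, your interpolation step requires $p$ to lie strictly between $2$ and $q$, so $p=2$ must be dispatched directly from the $L^2$ decay estimate (with $\alpha_2=\alpha$ and $(1+N(j))^0=1$). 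Second, the parenthetical justification that the $L^2$ operator norm of $\widetilde{T}_j^N$ is controlled by $\|\Omega\|_{L^\infty}$ ``using the $L^2$ boundedness of $T_\Omega$ and the uniform boundedness of $\{S_k\}$'' is not quite a proof, since $\widetilde{T}_j^N$ is an infinite sum over $k$ and one cannot simply pass bounds through the sum; the correct source of this bound is the very $L^2$ decay estimate you already invoke (take $2^{-\alpha N(j-1)}\le 1$), so the gap is cosmetic rather than structural.
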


For $[b,\widetilde{T}_j^N]$, we will obtain an analogous estimate as in Lemma \ref{lem:Lpunweighted} by using the \textit{method of conjugation}. Given an operator $T$ and $z$ a complex number, this technique consists of writing $[b,T]$ as a complex integral operator, by means of the Cauchy integral theorem, as
$$
[b,T]f=\frac{d}{dz}e^{zb}T
(fe^{-zb})\Big|_{z=0}=\frac{1}{2\pi i}\int_{|z|=\varepsilon}\frac{e^{zb}T(fe^{-zb})}{z^{2}}dz, \quad \varepsilon>0.
$$
Here, $z\to e^{zb}T(fe^{-zb})$ is called the \textit{conjugation} of $T$ by $e^{zb}$. Then, by applying Minkowski inequality, we get
$$
\|[b,T]f\|_{L^p}\leq\frac{1}{2\pi \varepsilon} \sup_{|z|=\varepsilon}  \| e^{zb}T(fe^{-zb})\|_{L^p}.
$$

\begin{lemma}\label{LemmaUnweighted} Let $1<p<\infty$.
Then we have
\[
\|[b,\widetilde{T}_{j}^{N}]f\| _{L^{p}}\leq c_{n,p}\|b\|_{\BMO}\|\Omega\|_{L^{\infty}}2^{-\alpha_{p,n}N(j-1)}\big(1+N(j)\big)\|f\|_{L^{p}}.
\]
\end{lemma}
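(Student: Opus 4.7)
My plan is to implement the Coifman--Rochberg--Weiss conjugation method outlined immediately before the statement. From Cauchy's formula
$$[b,\widetilde{T}_j^N]f=\frac{1}{2\pi i}\oint_{|z|=\eps}\frac{e^{zb}\,\widetilde{T}_j^N(e^{-zb}f)}{z^{2}}\,dz,$$
Minkowski's integral inequality gives $\|[b,\widetilde{T}_j^N]f\|_{L^p}\le \eps^{-1}\sup_{|z|=\eps}\|e^{zb}\widetilde{T}_j^N(e^{-zb}f)\|_{L^p}$, for a radius $\eps$ to be chosen at the end. Setting $v:=|e^{zb}|^p=e^{p\Re(zb)}$, the supremand equals $\|\widetilde{T}_j^N(e^{-zb}f)\|_{L^p(v)}$, and since $v\cdot|e^{-zb}|^p\equiv1$ we also have $\|e^{-zb}f\|_{L^p(v)}=\|f\|_{L^p}$. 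The whole argument thus reduces to a \emph{weighted} $L^p(v)$ bound for $\widetilde{T}_j^N$ that still carries the exponential decay in $N(j-1)$.

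To produce such a bound I would interpolate between two endpoints. The first is Lemma \ref{lem:Lpunweighted}, which supplies the unweighted decay $\|\Omega\|_{L^\infty}2^{-\alpha_q N(j-1)}(1+N(j))$ at every $L^q$, $1<q<\infty$. The second is a weighted $L^p(w)$ estimate for $w\in A_p$ obtained by feeding the $\omega_j^N$-Calder\'on--Zygmund structure of Lemma \ref{lem:modulus} into the sparse domination of Theorem \ref{thm:domination}; using $C_{\widetilde{T}_j^N}\lesssim\|\Omega\|_{L^\infty}(1+N(j))$ from Lemma \ref{lem:modulus}, this gives a bound of the shape $c_{n,p}\|\Omega\|_{L^\infty}(1+N(j))[w]_{A_p}^{\max(1,1/(p-1))}$, with no decay in $j$ but polynomial dependence on the $A_p$-constant. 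Stein--Weiss complex interpolation with change of measure between these two endpoints yields, for each $\theta\in(0,1)$,
$$\|\widetilde{T}_j^N g\|_{L^p(w^{\theta})}\le c_{n,p}\|\Omega\|_{L^\infty}\,2^{-(1-\theta)\alpha_p N(j-1)}(1+N(j))[w]_{A_p}^{\theta\max(1,1/(p-1))}\|g\|_{L^p(w^\theta)}.$$
I would then match this with our weight $v$ by writing $v=w^{\theta}$ with $w=e^{p\Re(zb)/\theta}$ and choosing $\eps\simeq\theta/\|b\|_{\BMO}$: the classical John--Nirenberg inequality guarantees $[w]_{A_p}\le 2$ uniformly for $z$ on the contour. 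Fixing $\theta$ equal to a dimensional constant (say $\theta=1/2$) and plugging back into the Cauchy estimate produces, with $\alpha_{p,n}:=(1-\theta)\alpha_p$ and $\eps^{-1}\simeq\|b\|_{\BMO}$, exactly the claimed inequality.

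The delicate point is the interpolation. A naive extrapolation from the unweighted Lemma \ref{lem:Lpunweighted} would yield $L^p(w)$ estimates but at the price of destroying the decay; the sparse-domination bound alone has no decay either. Only by interpolating the two endpoints at a parameter $\theta$ small enough to keep a definite fraction of the exponential decay, yet large enough that $\eps\simeq\theta/\|b\|_{\BMO}$ still fits inside the John--Nirenberg threshold that makes $e^{p\Re(zb)/\theta}$ a bounded $A_p$ weight, does one recover the bound in the stated form. Once that interpolated weighted estimate is in hand, the remaining manipulation---Cauchy's formula, John--Nirenberg, and the identity $v\cdot|e^{-zb}|^p\equiv 1$---is routine bookkeeping.
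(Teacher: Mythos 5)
Your proposal is correct and follows essentially the same route as the paper: Coifman--Rochberg--Weiss conjugation, Stein--Weiss interpolation with change of measure between the unweighted exponential-decay bound of Lemma~\ref{lem:Lpunweighted} and a polynomial $[w]_{A_p}$-bound coming from sparse domination, and finally John--Nirenberg to control the $A_p$ constant of the conjugated weight on the small contour. The only cosmetic difference is that you fix the interpolation parameter $\theta$ outright and rescale the weight ($v_z=w^\theta$ with $w=e^{p\Re(zb)/\theta}$), whereas the paper black-boxes the intermediate weighted estimate from \cite[Theorem~1.4]{HRT}, in which the interpolation parameter is tied to $[w]_{A_\infty}$ via the sharp RHI (hence the $2^{-\alpha_{p,n}N(j-1)/(c_n W_z)}$ in the paper's display); after John--Nirenberg bounds $W_z$ both variants collapse to the same inequality.
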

\begin{proof}
We may assume that $\|b\|_{\BMO}=1$. We use the conjugation method so, for any $\varepsilon>0$, we write
\[
 [b,\widetilde{T}_{j}^{N}]f=\frac{1}{2\pi i}\int_{|z|=\varepsilon}\frac{e^{zb}\widetilde{T}_{j}^{N}(fe^{-zb})}{z^{2}}dz.
\]
Therefore, for any $\varepsilon$,
\[
\|[b,\widetilde{T}_{j}^{N}]f\|_{L^{p}}\leq\frac{1}{2\pi \varepsilon}  \sup_{|z|=\varepsilon}  \| e^{zb}\widetilde{T}_{j}^{N}(fe^{-zb})\|_{L^{p}}=\frac{1}{2\pi \varepsilon} \sup_{|z|=\varepsilon}  \| \widetilde{T}_{j}^{N}(fe^{-zb})\|_{L^{p}(e^{p\Re zb})}.
\]

Let us fix $\varepsilon>0$ to be chosen later and then take $z$ such that $|z|=\varepsilon$. Let $w_{z}=e^{p\Re zb}$, $v_{z}=e^{zb}$, and
$W_{z}=\left[w_{z}\right] _{A_{p}}^{\max\left\{1,\frac{1}{p-1}\right\}}$.  Our goal is to derive estimates uniformly in $z$.
We observe that, as in the proof of \cite[Theorem 1.4]{HRT} (see Subsection 3.4 therein), in particular from the use of the RHI in Theorem \ref{thm:SharpRHI} and an application of the interpolation theorem with change of measures (see \cite{BL} or \cite[Theorem 2.11]{Stein-Weiss}), the following estimate can be derived
\begin{align*}
\|
\widetilde{T}_{j}^{N}(fv_{z}^{-1})\|_{L^{p}(w_{z})}&\leq c_{n,p}\|\Omega\|_{L^{\infty}}\big(1+N(j)\big)2^{-\alpha_{p,n}\frac{N(j-1)}{c_n  W_{z}}}
W_{z}\|fv_{z}^{-1}\|_{L^{p}(w_{z})}\\
&\leq c_{n,p}\|\Omega\|_{L^{\infty}}\big(1+N(j)\big)2^{-\alpha_{p,n}\frac{N(j-1)}{c_nW_{z}}}
W_{z}\|f\|_{L^{p}}.
\end{align*}
It follows essentially from the John-Nirenberg lemma (see \cite[Lemma 2.2]{CPP}) the following general fact: if $p\in(1,\infty)$, there are two dimensional constants $\alpha_n,\beta_n$ such that \footnote{The observation in \eqref{eq:obser} can be found after the proof of \cite[Lemma 2.2, p. 1167]{CPP}. However, there is a misprint therein: $\min\big\{1,\frac{1}{p-1}\big\} $ should be $\min\{1,p-1\}$.}
\begin{equation}
\label{eq:obser}
s \in \R, \quad  |s| \leq \frac{\alpha_n}{\|b\|_{\BMO}}  \min\{1,p-1\}     \Rightarrow e^{s\,b}\in A_p \quad
\mbox{and} \quad [e^{s\,b}]_{A_p}\leq \beta^p_{n} 
\end{equation}
Taking into account the latter and that $\|b\|_{\BMO}=1$, the choice $\varepsilon= \alpha_n \min\{\frac1p,\frac{1}{p'}\}$ yields $W_{z}\leq c_{n,p}$ whenever $|z|\leq \varepsilon$, concluding the proof of the lemma.

\end{proof}

\subsection{Two-weight estimate for $\widetilde{T}_j^N$}

In this subsection we present the following two-weight estimate.
\begin{lemma}
\label{lem:Lpweighted}
Let $1<p<\infty$ and $r>1$. Then, for all $w\ge0$, we have
$$
\|\widetilde{T}_j^N f\|_{L^p(w)}\le  c_{n}\|\Omega\|_{L^\infty}\big(1+N(j)\big)pp'(r')^{\frac{1}{p'}} \|f\|_{L^p(M_rw)}.
$$
\end{lemma}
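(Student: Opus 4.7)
The plan is to realise Lemma~\ref{lem:Lpweighted} as a direct consequence of the sparse domination in Theorem~\ref{thm:domination} combined with the classical two-weight estimate \eqref{LpMrT} for sparse operators from \cite{LOP1}. The first step is to bound the Calder\'on--Zygmund constant $C_{\widetilde{T}_j^N}$ defined in \eqref{eq:CT}. By Lemma~\ref{lem:modulus} the Dini norm satisfies $\|\omega_j^N\|_{\Dini}\le c_n\|\Omega\|_{L^\infty}(1+N(j))$; the kernel size constant $C_K$ of $\widetilde{T}_j^N$ is at most $c_n\|\Omega\|_{L^\infty}$ directly from the fact that the kernel of $\widetilde{T}_j^N$ is obtained by convolving the kernel of $T_\Omega$ with $L^1$-normalised bumps $\phi_j,\psi_j$; and the $L^2\to L^2$ norm is at most $c_n\|\Omega\|_{L^\infty}(1+N(j))$ by Lemma~\ref{lem:Lpunweighted} at $p=2$ (absorbing the harmless factor $2^{-\alpha_2 N(j-1)}\le 1$). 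Putting these three pieces together yields
\[
C_{\widetilde{T}_j^N}\le c_n\|\Omega\|_{L^\infty}\bigl(1+N(j)\bigr).
\]

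Inserting this bound into Theorem~\ref{thm:domination} gives the pointwise sparse control
\[
\bigl|\widetilde{T}_j^N f(x)\bigr|\le c_n\|\Omega\|_{L^\infty}\bigl(1+N(j)\bigr)\sum_{i=1}^{3^n}\mathcal{A}_{\mathscr{S}_i}f(x)
\]
for almost every $x\in\R^n$. Taking $L^p(w)$ norms and using the triangle inequality, the lemma reduces to the two-weight sparse estimate
\[
\|\mathcal{A}_\mathscr{S}f\|_{L^p(w)}\le c\,pp'(r')^{1/p'}\|f\|_{L^p(M_rw)},
\]
which is exactly \eqref{LpMrT} for sparse operators as established in \cite{LOP1}. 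If a self-contained proof is preferred, I would dualise against $g\in L^{p'}(w)$ with $g\geq 0$ and $\|g\|_{L^{p'}(w)}=1$, use the sparseness property $|Q|\le \eta^{-1}|E_Q|$ to write
\[
\int(\mathcal{A}_\mathscr{S}f)\,gw\,dx=\sum_{Q\in\mathscr{S}}|f|_Q\int_Q gw \le \eta^{-1}\sum_{Q\in\mathscr{S}}|f|_Q\,|gw|_Q\,|E_Q|,
\]
then estimate $|f|_Q\le M_r f(x)$ on $E_Q$ by Jensen with exponent $r$ and $|gw|_Q\le M(gw)(x)$, and exploit the disjointness of the sets $E_Q$ together with the sharp $L^p$ bound $\|M\|_{L^p\to L^p}\le c\,p'$ and the elementary $\|M_r f\|_{L^p(w)}\le c\,(r')^{1/p'}\|f\|_{L^p(M_r w)}$ to recover the factor $pp'(r')^{1/p'}$.

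The main obstacle is really just bookkeeping: one has to verify that $(1+N(j))$ is the only $j$-dependent loss coming out of the sparse domination step, and that the dependence $pp'(r')^{1/p'}$ in the two-weight bound for sparse operators is preserved when composed with the pointwise domination. No new ideas are required beyond those already in Theorem~\ref{thm:domination}, Lemma~\ref{lem:modulus}, Lemma~\ref{lem:Lpunweighted} and \cite{LOP1}, so the proof should be short.
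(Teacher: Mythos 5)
Your primary route --- bound $C_{\widetilde T_j^N}\le c_n\|\Omega\|_{L^\infty}(1+N(j))$ via Lemma~\ref{lem:modulus} (together with Lemma~\ref{lem:Lpunweighted} at $p=2$ for the $L^2\to L^2$ piece), feed this into the sparse domination of Theorem~\ref{thm:domination}, and then invoke a two-weight bound $\|\mathcal A_{\Ss}f\|_{L^p(w)}\le c\,pp'(r')^{1/p'}\|f\|_{L^p(M_rw)}$ --- is exactly what the paper does: it follows the scheme of \cite[pp.~617--619]{HP2} combined with Theorem~\ref{thm:domination}, Lemma~\ref{lem:modulus}, and Lemma~\ref{lem:sparseHP}. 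One attribution issue: the sparse-operator form of the two-weight bound is not literally in \cite{LOP1}; there, \eqref{LpMrT} is stated and proved for Calder\'on--Zygmund operators. What the paper uses is Lemma~\ref{lem:sparseHP} (i.e.\ \cite[Lemma~4.3]{HP2}), which by duality and self-adjointness of $\mathcal A_{\Ss}$ reduces the matter to the Fefferman--Stein-type estimate $\|Mg\|_{L^{p'}((M_rw)^{1-p'})}\le cp(r')^{1/p'}\|g\|_{L^{p'}(w^{1-p'})}$ of \cite[Lemma~3.4]{LOP1}.

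Your ``self-contained'' alternative, as sketched, would not yield the stated $(r')^{1/p'}$. Dualising against $g$ with $\|g\|_{L^{p'}(w)}=1$, using sparseness, and estimating $|f|_Q\le M_rf$ and $|gw|_Q\le M(gw)$ on $E_Q$ leads to $\int M_rf\cdot M(gw)\,dx$. Inserting the only natural weight $M_rw$ and applying H\"older, the term $\|M(gw)\|_{L^{p'}((M_rw)^{1-p'})}$ must be controlled; since $M_rw\in A_1$ with $[M_rw]_{A_1}\simeq r'$ (Coifman--Rochberg), Buckley gives only $\|M\|_{L^{p'}((M_rw)^{1-p'})\to L^{p'}((M_rw)^{1-p'})}\lesssim p\,r'$, so the resulting bound carries $r'$, not $(r')^{1/p'}$. (Inserting $w$ instead would require $w\in A_p$, which is not assumed.) The sharper exponent on $r'$ comes precisely from the more delicate set-up of \cite[Lemma~3.4]{LOP1}, or equivalently from the Carleson embedding argument underlying Lemma~\ref{lem:sparseHP}: one dualises $\mathcal A_{\Ss}$ against $g\in L^{p'}(w^{1-p'})$, uses Lemma~\ref{lem:sparseHP} to dominate $\|\mathcal A_{\Ss}g\|_{L^{p'}((M_rw)^{1-p'})}$ by $Cp'\|Mg\|_{L^{p'}((M_rw)^{1-p'})}$, and only then applies \cite[Lemma~3.4]{LOP1}. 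Your first route is sound; the self-contained sketch needs to be replaced by that argument.
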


Lemma \ref{lem:Lpweighted} follows from several known facts. One of them is the quantitative pointwise domination (Theorem \ref{thm:domination}). The other one is a particular case of \cite[Lemma 4.3]{HP2} which in turn it is based on the original arguments from \cite{LOP1,LOP2}.

\begin{lemma}[\cite{HP2}]
\label{lem:sparseHP}
Let $w$ be any weight, $1<p<\infty$, $r>1$ and $\D$ a dyadic lattice. Then for any sparse family  $\Ss \subseteq \D$ and $g\ge0$, we have that
$$
\|\mathcal{A}_{ \Ss}g\|_{L^{p'}((M_rw)^{1-p'})}\le C p'\|Mg\|_{L^{p'}((M_rw)^{1-p'})}.
$$
\end{lemma}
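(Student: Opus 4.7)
The plan is to reduce the estimate to a one-weight bound for the Hardy--Littlewood maximal function on $M_{r}w$, via a duality/sparseness argument followed by a sharp $A_{p}$ inequality of Buckley type. Set $\sigma:=(M_{r}w)^{1-p'}$. By $L^{p'}$--$L^{p}$ duality with weight $\sigma$, I would begin with
\begin{equation*}
\|\mathcal{A}_{\Ss}g\|_{L^{p'}(\sigma)}=\sup\sum_{Q\in\Ss}g_{Q}\int_{Q}h\,\sigma\,dx,
\end{equation*}
the supremum running over all non-negative $h$ with $\|h\|_{L^{p}(\sigma)}=1$.

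The first core step is to linearise both factors using sparseness. For each $x\in E_{Q}\subseteq Q$ one has $g_{Q}\le Mg(x)$ and $|Q|^{-1}\int_{Q}h\sigma\le M(h\sigma)(x)$. Since $|E_{Q}|\ge\eta|Q|$ and the sets $E_{Q}$ are pairwise disjoint, multiplying these estimates, integrating over $E_{Q}$ and summing yields
\begin{equation*}
\sum_{Q\in\Ss}g_{Q}\int_{Q}h\sigma\,dx\;\le\;\frac{1}{\eta}\sum_{Q\in\Ss}\int_{E_{Q}}Mg\cdot M(h\sigma)\,dx\;\le\;\frac{1}{\eta}\int_{\R^{n}}Mg\cdot M(h\sigma)\,dx.
\end{equation*}
H\"older's inequality with exponents $p'$ and $p$ then produces
\begin{equation*}
\int_{\R^{n}}Mg\cdot M(h\sigma)\,dx\;\le\;\|Mg\|_{L^{p'}(\sigma)}\Big(\int_{\R^{n}}(M(h\sigma))^{p}\,\sigma^{1-p}\,dx\Big)^{1/p},
\end{equation*}
and the algebraic identity $(p-1)(p'-1)=1$ rewrites $\sigma^{1-p}=(M_{r}w)^{(1-p')(1-p)}=M_{r}w$.

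To close the argument I would show the one-weight bound $\int(M(h\sigma))^{p}M_{r}w\,dx\le(Cp')^{p}\|h\|_{L^{p}(\sigma)}^{p}$. After the change of variable $f=h\sigma$, the same exponent arithmetic gives $h^{p}\sigma=f^{p}M_{r}w$, so the required bound reduces to $\|Mf\|_{L^{p}(M_{r}w)}\le Cp'\|f\|_{L^{p}(M_{r}w)}$, i.e.\ the boundedness of $M$ on $L^{p}(M_{r}w)$. For this I would invoke the Coifman--Rochberg theorem applied to $w^{r}$ with exponent $1/r$: since $M_{r}w=(Mw^{r})^{1/r}$, it is an $A_{1}$ weight with $[M_{r}w]_{A_{1}}\le c_{n}r'$ (the factor $r'$ coming from $1/(1-1/r)$), and therefore $[M_{r}w]_{A_{p}}\le c_{n}r'$. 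Buckley's sharp theorem then delivers $\|M\|_{L^{p}(M_{r}w)}\le c_{n}\,p'\,(r')^{1/(p-1)}$, finishing the argument with the $r$-dependence absorbed into the implicit constant $C$.

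The main technical point is the exponent arithmetic $(p-1)(p'-1)=1$, which allows $\sigma$ and $M_{r}w$ to cancel cleanly at two different stages (first in the H\"older step, then after the $f=h\sigma$ substitution); once it is in place, the rest is duality, sparseness, and the classical $A_{1}$ character of the Hardy--Littlewood maximal function. Beyond that, the only substantive obstacle is quantifying $[M_{r}w]_{A_{1}}$, and this is precisely where the Coifman--Rochberg estimate with exponent $1/r$ supplies the factor $r'$ that the implicit constant in the lemma must absorb.
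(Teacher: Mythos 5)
Your argument is correct up to the point where you reduce matters to the one--weight inequality $\|Mf\|_{L^p(M_rw)}\le Cp'\|f\|_{L^p(M_rw)}$: the duality step, the use of sparseness to pass to $\frac{1}{\eta}\int Mg\cdot M(h\sigma)\,dx$, and the exponent arithmetic $\sigma^{1-p}=M_rw$ are all fine. The gap is in the last step, and it is quantitative but fatal for the purpose this lemma serves in the paper. What you actually prove is $\|\mathcal{A}_{\Ss}g\|_{L^{p'}(\sigma)}\le c_np'(r')^{1/(p-1)}\|Mg\|_{L^{p'}(\sigma)}$, since Buckley's theorem costs $[M_rw]_{A_p}^{1/(p-1)}\le (c_nr')^{1/(p-1)}$. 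This $r$-dependence cannot be ``absorbed into the implicit constant'': the statement asserts the bound $Cp'$ with $C$ dimensional, and that is precisely what is needed downstream. In Lemma \ref{lem:Lpweighted} the only admissible power of $r'$ is the $(r')^{1/p'}$ coming from \cite[Lemma 3.4]{LOP1}; an extra $(r')^{1/(p-1)}$ would propagate through Theorem \ref{thm:main1} and, after the choice $s_w=1+1/(\tau_n[w]_{A_\infty})$, would degrade the exponent $1+\frac{1}{p'}$ in Corollary \ref{cor1} to $1+\frac{1}{p'}+\frac{1}{p-1}$.

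Moreover, the loss is intrinsic to your reduction, not merely to the estimates you invoke: the inequality $\|Mf\|_{L^p(M_rw)}\le Cp'\|f\|_{L^p(M_rw)}$ with $C$ independent of $r$ is false. Take $w=\chi_{[0,1]}$ on $\R$, so that $M_rw\simeq(1+|x|)^{-1/r}$, and test with $f=\chi_{(R,2R)}$: then $\|f\|_{L^p(M_rw)}^p\simeq R^{1-1/r}$, while $Mf\ge\frac12$ on $(0,R)$ gives $\|Mf\|_{L^p(M_rw)}^p\gtrsim r'\,R^{1-1/r}$, whence $\|M\|_{L^p(M_rw)\to L^p(M_rw)}\gtrsim(r')^{1/p}\to\infty$ as $r\to1^+$. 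By passing from $M(h\sigma)$ to a one--weight bound you discard the coupling between the function $h\sigma$ (with $h\in L^p(\sigma)$) and the dual weight $\sigma=(M_rw)^{1-p'}$. The proof the paper relies on (\cite[Lemma 4.3]{HP2}, built on the arguments of \cite{LOP1,LOP2}) keeps that coupling: it works directly with the sum $\sum_{Q\in\Ss}g_Q\int_Qh\sigma$, using $g_Q\le\inf_QMg$ together with the dyadic Carleson embedding theorem relative to the measure $\sigma\,dx$ --- the same circle of ideas as Theorem \ref{thm:Carleson} and Lemma \ref{4.1HP2Gen} here --- and never invokes the $A_p$ constant of $M_rw$; that is what produces a constant $Cp'$ free of $r$.
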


The proof of Lemma \ref{lem:Lpweighted} follows the scheme in \cite[pp. 617--619]{HP2}, combined with Theorem \ref{thm:domination}, Lemma \ref{lem:modulus}, and Lemma \ref{lem:sparseHP}. We skip the details.

\subsection{Two-weight inequality for $[b,\widetilde{T}_j^N]$}
In this subsection we establish a two-weight estimate for $[b,\tilde{T}_j^N]$, which is very similar to the one obtained in Lemma \ref{lem:Lpweighted}.
\begin{lemma}
\label{LemmaMr}Let $1<p<\infty$ and $r>1$. Then, for all $w\geq0$,
we have
\[
\|[b,\tilde{T_{j}^{N}}]f\|_{L^{p}(w)}\leq c_{n}\|b\|_{\BMO}\|\Omega\|_{L^{\infty}}\big(1+N(j)\big)(p'p)^{2}(r')^{1+\frac{1}{p'}}
\|f\|_{L^{p}(M_{r}w)}.
\]
\end{lemma}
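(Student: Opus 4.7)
The plan is to follow the scheme of Lemma \ref{lem:Lpweighted}—sparse domination, duality, and a two-weight estimate for the resulting sparse forms—with an additional BMO-to-$L\log L$ step provided by the generalized Hölder inequality \eqref{HGenBMOLlogL}. First I would apply Theorem \ref{thm:ConmDomination} to $\widetilde{T}_j^N$: combined with $C_{\widetilde{T}_j^N}\leq c_n\|\Omega\|_{L^\infty}(1+N(j))$ from Lemma \ref{lem:modulus}, this reduces matters to showing, uniformly over sparse collections $\mathscr{S}$,
\[
\|\mathcal{T}_{\mathscr{S},b}|f|\|_{L^p(w)}+\|\mathcal{T}^*_{\mathscr{S},b}|f|\|_{L^p(w)}\leq c_n\|b\|_{\BMO}(pp')^2(r')^{1+\frac{1}{p'}}\|f\|_{L^p(M_r w)}.
\]

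For $\mathcal{T}^*_{\mathscr{S},b}$ the BMO factor sits inside the $Q$-average, so a pointwise application of \eqref{HGenBMOLlogL} gives
\[
\mathcal{T}^*_{\mathscr{S},b}|f|(x)\leq c_n\|b\|_{\BMO}\sum_{Q\in\mathscr{S}}\|f\|_{L\log L,Q}\chi_Q(x).
\]
Since $\|f\|_{L\log L,Q}\leq r'\|f\|_{L^r,Q}$ by (the local form of) \eqref{eq:desig}, the right-hand side is dominated by $r'$ times an ordinary $L^r$-averaged sparse operator, to which the duality argument of Lemma \ref{lem:sparseHP} applies directly—producing an additional $pp'$ together with the $(r')^{1/p'}$ from the Coifman–Rochberg constant of $M_r w$. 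For $\mathcal{T}_{\mathscr{S},b}$ the BMO factor is on the outside; I would dualize against $g\geq 0$ with $\|g\|_{L^{p'}(w^{1-p'})}\leq 1$, expand, and apply \eqref{HGenBMOLlogL} to the inner integral to obtain
\[
\int\mathcal{T}_{\mathscr{S},b}|f|\cdot g\,dx\leq c_n\|b\|_{\BMO}\sum_{Q\in\mathscr{S}}|f|_Q\|g\|_{L\log L,Q}|Q|.
\]

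Using sparseness to pass to the disjoint subsets $E_Q\subset Q$ and pointwise-dominating each average by the corresponding maximal operator yields $\eta^{-1}\int Mf\cdot M_{L\log L}g\,dx$; invoking \eqref{eq:desig} this is at most $\eta^{-1}r'\int Mf\cdot M_r g\,dx$. A Hölder inequality with the weight pair $(M_r w,(M_r w)^{1-p'})$, followed by two applications of a two-weight maximal bound in the spirit of Lemma \ref{lem:sparseHP} (one for $M$ into $L^p(M_r w)$ and one for $M_r$ into $L^{p'}((M_r w)^{1-p'})$), delivers the claim. Each of these two maximal bounds contributes a $pp'$ factor, and the Coifman–Rochberg estimate $[M_r w]_{A_1}\leq Cr'$ accounts for the remaining $(r')^{1/p'}$.

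The main technical obstacle is tracking the sharp $p$- and $r$-dependence through the three weights $w$, $M_r w$, and $(M_r w)^{1-p'}$ that appear in the argument. The extra $r'$ relative to Lemma \ref{lem:Lpweighted} originates entirely from the single use of $M_{L\log L}f\leq r' M_r f$—the unavoidable price of the BMO-to-$L\log L$ conversion—while the extra $pp'$ arises from having to apply the sparse-to-maximal domination twice (once on each side of the bilinear pairing) rather than once.
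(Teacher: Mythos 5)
There is a genuine gap: your outline omits the Rubio de Francia iteration, which is the load-bearing step in the paper's argument, and without it you cannot reproduce the claimed $(r')^{1+1/p'}$ dependence. In the paper, one dualizes against $h$ with $\|h\|_{L^{p}(M_{r}w)}=1$ and builds $Rh$ so that $Rh\,(M_rw)^{1/p}\in A_1$ with $[Rh]_{A_\infty}\lesssim p'$. This lets one replace $\|Rh\|_{L\log L,Q}$ by $c_np'\,\frac{1}{|Q|}\int_Q Rh$ via the sharp RHI, and it is precisely this absorption that keeps the $I$-term contribution at $(r')^{1/p'}$. In your scheme the dual function $g$ is arbitrary, so $\|g\|_{L\log L,Q}$ cannot be collapsed; you must carry $M_{L\log L}g$ (or worse, $M_r g$), and the only bounds available cost an additional $(r')^{1/p}$ (from $[M_rw]_{A_1}\lesssim r'$ acting on the one-weight side of your H\"older inequality), yielding $(r')^{2}$ rather than $(r')^{1+1/p'}$. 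After optimizing $\theta$ in Theorem \ref{thm:main1}, that excess propagates to $[w]_{A_\infty}^{3}$ in Corollary \ref{cor1}, not $[w]_{A_\infty}^{2+1/p'}$.

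Two secondary problems. First, replacing $M_{L\log L}g$ by $r'M_r g$ via \eqref{eq:desig} moves in the wrong direction: the needed two-weight bound $\|M_r g\|_{L^{p'}((M_rw)^{1-p'})}\lesssim\|g\|_{L^{p'}(w^{1-p'})}$ is not covered by Lemma \ref{lem:sparseHP} (which controls the \emph{sparse} operator $\mathcal{A}_\mathscr{S}$ by $M$, with the \emph{same} weight on both sides) nor by \cite[Lemma~3.4]{LOP1} (which is for $M$, not $M_r$). Second, your claim that the $(r')^{1/p'}$ ``comes from the Coifman--Rochberg constant of $M_rw$'' misidentifies the source: in the paper it comes from the two-weight maximal estimates $\|Mf\|_{L^{p'}((M_rw)^{1-p'})}\lesssim p(r')^{1/p'}\|f\|_{L^{p'}(w^{1-p'})}$ and $\|M_{L\log L}f\|_{L^{p'}((M_rw)^{1-p'})}\lesssim p^2(r')^{1+1/p'}\|f\|_{L^{p'}(w^{1-p'})}$, applied after the Carleson embedding (Lemma \ref{4.1HP2Gen}) has converted the sparse sum into an $L^1(Rh)$-norm of a maximal function. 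You should restructure the proof around the Rubio de Francia construction and Lemma \ref{4.1HP2Gen}, splitting into the terms $I$ and $II$ as the paper does, rather than dualizing against an unstructured $g$.
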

The proof of Lemma \ref{LemmaMr} is similar to the proof of Lemma \ref{lem:Lpweighted} but we include it for completeness. We need a generalization of \cite[Lemma 4.1]{HP2} whose proof will be based on  the following dyadic version of the Carleson embedding theorem (see for instance \cite[Theorem 4.5]{HP1}):
\begin{theorem} [\cite{HP1}]
\label{thm:Carleson}
Let $\mathscr{D}$ be a dyadic lattice and let $\{a_Q\}_{Q \in \mathscr{D}}$ be a sequence of nonnegative numbers  satisfying the \textit{Carleson condition}
$$
\sum_{Q\subseteq R} a_Q\le A w(R), \quad R\in \mathscr{D},
$$
for some constant $A>0$.
Then, for all $p\in (1,\infty)$ and $f\in L^p(w)$,
$$
\bigg(\sum_{Q\in \mathscr{D}}a_Q\Big(\frac{1}{w(Q)}\int_Q
f(x)w(x)dx\Big)^p\bigg)^{1/p}\le A^{1/p}\cdot p'\cdot \|f\|_{L^p(w)}.
$$
\end{theorem}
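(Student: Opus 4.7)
The plan is to deduce the embedding from the $L^p(w)$-boundedness of the weighted dyadic maximal operator
\[
M_w^{\mathscr{D}}f(x) := \sup_{Q\in\mathscr{D},\ Q\ni x}\frac{1}{w(Q)}\int_Q |f|\,w,
\]
combined with a layer-cake decomposition that lets the Carleson hypothesis be applied at a suitable stopping-time family. I would first reduce to $f\ge 0$, set $f_Q^w := \frac{1}{w(Q)}\int_Q f\,w$, and rewrite
\[
\sum_{Q\in\mathscr{D}} a_Q (f_Q^w)^p = p\int_0^\infty t^{p-1}\!\sum_{Q:\,f_Q^w>t} a_Q\,dt.
\]

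The key step is to control the inner sum for each $t>0$. For this I would introduce the maximal cubes $\{R_\alpha^t\}_\alpha$ in the family $\{Q\in\mathscr{D}:f_Q^w>t\}$, which are well-defined since $f\in L^p(w)$ forces $f_Q^w\to 0$ as $Q$ grows. These cubes are pairwise disjoint, every $Q$ in the stopping family sits inside some $R_\alpha^t$, and one checks directly that $\bigsqcup_\alpha R_\alpha^t=\{x:M_w^{\mathscr{D}}f(x)>t\}$. Applying the Carleson hypothesis to each $R_\alpha^t$ separately then yields
\[
\sum_{Q:\,f_Q^w>t} a_Q \le \sum_\alpha\sum_{Q\subseteq R_\alpha^t} a_Q \le A\sum_\alpha w(R_\alpha^t) = A\,w\big(\{M_w^{\mathscr{D}}f>t\}\big),
\]
and integrating in $t$ by the reverse layer-cake identity gives
\[
\sum_{Q\in\mathscr{D}} a_Q (f_Q^w)^p \le A\,\|M_w^{\mathscr{D}}f\|_{L^p(w)}^p.
\]

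The last ingredient would be the weighted dyadic Doob inequality $\|M_w^{\mathscr{D}}f\|_{L^p(w)}\le p'\|f\|_{L^p(w)}$ for $1<p<\infty$, with sharp constant $p'$; after this the proof is finished by taking $p$-th roots to recover the constant $A^{1/p}\cdot p'$. That inequality I would prove by Marcinkiewicz interpolation between the trivial $L^\infty(w)$-bound (constant $1$) and the weak-type $(1,1)$ estimate with respect to $w\,dx$ with constant $1$, the latter being immediate from the same stopping argument since by construction $w(R_\alpha^t)\le t^{-1}\int_{R_\alpha^t}f\,w$. The main technical point is to obtain the \emph{sharp} $p'$ constant rather than a cruder $p/(p-1)$-type bound, since it is this sharp constant that makes the whole embedding useful as $p\to 1$; fortunately this is classical, being Doob's original martingale maximal inequality transcribed to the setting of the dyadic lattice with the measure $w\,dx$.
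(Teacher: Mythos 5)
The paper cites \cite{HP1} for this statement and does not reproduce a proof, so there is no in-paper proof to compare against line by line; I am judging your argument on its own terms and against the standard proof in \cite{HP1}, which is precisely the stopping-cubes-plus-Doob argument you sketch. Your outline is correct and matches that approach. Two small points deserve attention. First, the existence of the maximal cubes $R^t_\alpha$ relies on $f_Q^w\to 0$ as $Q$ grows; by H\"older this holds when $w(Q)\to\infty$, but if $w(\mathbb{R}^n)<\infty$ the stopping family $\{Q:f_Q^w>t\}$ can fail to have maximal elements for small $t$. This is harmless --- then $\{M_w^{\mathscr{D}}f>t\}=\mathbb{R}^n$ and $\sum_{Q:f_Q^w>t}a_Q\le\sum_Q a_Q\le A\,w(\mathbb{R}^n)$ by exhausting the lattice --- but it should be said rather than asserted away. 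Second, and slightly more substantively, Marcinkiewicz interpolation between weak $(1,1)$ and $L^\infty$ does \emph{not} produce the sharp constant $p'$: the generic Marcinkiewicz argument gives roughly $2(p')^{1/p}$, i.e.\ a loss of essentially a factor $2$. The clean constant comes from Doob's direct argument: write $\|M_w^{\mathscr{D}}f\|_{L^p(w)}^p=p\int_0^\infty t^{p-1}\,w(\{M_w^{\mathscr{D}}f>t\})\,dt$, insert the weak-type bound $w(\{M_w^{\mathscr{D}}f>t\})\le t^{-1}\int_{\{M_w^{\mathscr{D}}f>t\}}f\,w$, exchange the order of integration to obtain $p'\int f\,(M_w^{\mathscr{D}}f)^{p-1}w$, and finish with H\"older and division by $\|M_w^{\mathscr{D}}f\|_{L^p(w)}^{p-1}$. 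You do ultimately attribute the bound to Doob, so this is a misnaming of the method rather than a genuine gap; still, the word ``Marcinkiewicz'' should be dropped, since following that route literally would spoil the constant you need.
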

\begin{lemma}\label{4.1HP2Gen}
Let $w\in A_\infty$. Let $\mathscr{D}$ be a dyadic lattice and $\mathscr{S}\subset\mathscr{D}$ be an $\eta$-sparse family. Let $\Psi$ be a Young function. Given a measurable function $f$  on $\mathbb{R}^n$ define
\[
\mathcal{B}_\mathscr{S}f(x):=\sum_{Q\in\mathscr{S}}\|f\|_{\Psi(L),Q}\chi_Q(x).
\] 
Then we have 
\[
\|\mathcal{B}_\mathscr{S}f\|_{L^1(w)}\leq \frac{4}{\eta}[w]_{A_\infty}\|M_{\Psi(L)}f\|_{L^1(w)}.
\]
\end{lemma}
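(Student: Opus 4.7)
The plan is to extract a weighted Carleson-type condition on $\mathscr{S}$ and then conclude by a layer-cake argument. Set $g := M_{\Psi(L)}f$, $\lambda_Q := \|f\|_{\Psi(L),Q}$, and $U_t := \{g > t\}$. By the very definition of $M_{\Psi(L)}$ one has $\lambda_Q \le g(x)$ for every $x\in Q$, hence $\lambda_Q > t \Rightarrow Q \subseteq U_t$; moreover
\[
\|\mathcal{B}_\mathscr{S}f\|_{L^1(w)} \;=\; \sum_{Q\in\mathscr{S}} \lambda_Q\, w(Q),
\]
and our task is to control this sum.

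The main step is the Carleson estimate
\[
\sum_{Q\in\mathscr{S},\,Q\subseteq R} w(Q) \;\le\; \frac{1}{\eta}[w]_{A_\infty}\, w(R), \qquad R\in\mathscr{D},
\]
which I would derive by combining sparseness with the Fujii--Wilson characterization \eqref{eq:infinity}. For every $x\in E_Q$ with $Q\subseteq R$, testing $M(w\chi_R)$ against the averaging cube $Q$ gives $M(w\chi_R)(x) \ge w(Q)/|Q|$; integrating over the pairwise disjoint sets $E_Q$ and using $|E_Q|\ge \eta |Q|$ yields
\[
\eta \sum_{Q\subseteq R} w(Q) \;\le\; \sum_{Q\subseteq R}\frac{w(Q)}{|Q|}|E_Q| \;\le\; \int_R M(w\chi_R)\,dx \;\le\; [w]_{A_\infty}\, w(R).
\]
A routine extension from a single $R\in\mathscr{D}$ to the level set $U_t$ is obtained by writing $\bigcup\{R\in\mathscr{D}:R\subseteq U_t\}$ as a disjoint union of maximal dyadic cubes $R_k$ and noting that any $Q\in\mathscr{S}$ contained in $U_t$ must lie in a unique $R_k$ (being dyadic, it cannot strictly contain any $R_k$ by maximality); summing the previous estimate over $k$ gives $\sum_{Q\subseteq U_t} w(Q) \le \tfrac{1}{\eta}[w]_{A_\infty}\,w(U_t)$.

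Finally, a layer-cake representation together with Fubini yields
\[
\sum_{Q\in\mathscr{S}} \lambda_Q\, w(Q) \;=\; \int_0^\infty \sum_{Q:\,\lambda_Q > t} w(Q)\,dt \;\le\; \frac{[w]_{A_\infty}}{\eta}\int_0^\infty w(U_t)\,dt \;=\; \frac{[w]_{A_\infty}}{\eta}\|g\|_{L^1(w)},
\]
which is at least as strong as the stated bound. The only nontrivial ingredient is the Carleson estimate, where the precise quantitative use of the Fujii--Wilson definition is essential; everything else is bookkeeping.
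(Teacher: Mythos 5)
Your proof is correct, and the decisive ingredient, the Carleson condition
\begin{equation*}
\sum_{\substack{Q\in\mathscr{S}\\ Q\subseteq R}} w(Q)\;\le\;\frac{1}{\eta}[w]_{A_\infty}\,w(R),\qquad R\in\mathscr{D},
\end{equation*}
is obtained exactly as in the paper: test $M(w\chi_R)$ on the averaging cube $Q$, integrate over the disjoint sets $E_Q$, and invoke the Fujii--Wilson definition of $[w]_{A_\infty}$. What differs is how you pass from this Carleson condition to the $L^1(w)$ bound. The paper feeds the Carleson condition into the dyadic Carleson embedding theorem (Theorem \ref{thm:Carleson}) at exponent $p=2$, after first lowering $\|f\|_{\Psi(L),Q}$ to $\inf_{z\in Q}M_{\Psi(L)}f(z)$ and then to the square of a weighted average of $(M_{\Psi(L)}f)^{1/2}$; the embedding then produces the factor $A^{1/2}\cdot p'=2\sqrt{A}$, hence the $4/\eta$ in the stated constant. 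You instead run a layer-cake/Tonelli argument: $\|f\|_{\Psi(L),Q}\le M_{\Psi(L)}f$ on $Q$ forces $Q\subseteq U_t=\{M_{\Psi(L)}f>t\}$ whenever $\|f\|_{\Psi(L),Q}>t$, so the problem reduces to showing the Carleson condition persists over the level sets $U_t$. This is more elementary (no embedding theorem, no $p=2$ trick) and in fact yields the sharper constant $\frac{1}{\eta}[w]_{A_\infty}$ rather than $\frac{4}{\eta}[w]_{A_\infty}$. The one place you should be slightly more careful is the ``routine extension'' from dyadic $R$ to the open set $U_t$: in a dyadic lattice, maximal dyadic cubes contained in $U_t$ need not exist if $U_t$ contains an entire dyadic ``quadrant'' (an increasing chain of ancestors). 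The cleanest fix is to first prove the lemma for finite subfamilies $\mathscr{S}'\subset\mathscr{S}$ (where maximality is unproblematic) and then pass to the limit by monotone convergence; alternatively, treat the finitely many unbounded chains directly via the monotone increase $\sum_{Q\subseteq Q_k}w(Q)\le\frac{1}{\eta}[w]_{A_\infty}w(Q_k)$ along the chain. With that technical point addressed, the argument is complete.
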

\begin{proof} First, we see that
\begin{align*}
\|\mathcal{B}_\mathscr{S}f\|_{L^1(w)}&=\sum_{Q\in\mathscr{S}}\|f\|_{\Psi(L),Q}w(Q)
\leq\sum_{Q\in\mathscr{S}}\big(\inf_{Q\in z}M_{\Psi(L)}f(z)\big)w(Q)\\
&\leq\sum_{Q\in\mathscr{S}}\Big(\frac{1}{w(Q)}\int_Q
\big(M_{\Psi(L)}f(x)\big)^\frac{1}{2}w(x)dx
\Big)^2w(Q).
\end{align*}
Applying Carleson embedding theorem (Theorem \ref{thm:Carleson}) with $g=(M_{\Psi(L)}f)^\frac{1}{2}$ we have that
\[\sum_{Q\in\mathscr{S}}\Big(\frac{1}{w(Q)}\int_Q g w(x)d(x)\Big)^2w(Q)\leq 4A\|g\|^2_{L^2(w)}=4A\|M_{\Psi(L)}f\|_{L^1(w)}\]
provided we can show that the Carleson condition
\[\sum_{\substack{Q\subseteq R\\
R\in\mathscr{S}}}w(Q)\leq A w(R)\]holds. We observe that
\begin{align*}
\sum_{\substack{Q\subseteq R\\
R\in\mathscr{S}}}w(Q)&\leq \sum_{\substack{Q\subseteq R\\
R\in\mathscr{S}}}\frac{w(Q)}{|Q|}|Q|
\leq \sum_{\substack{Q\subseteq R\\
R\in\mathscr{S}}} \inf_{z\in Q}M(\chi_Rw)(z)\frac{1}{\eta}|E_Q|\\
& \leq \frac{1}{\eta} \int_R M(\chi_Rw)(z)dz \leq\frac{1}{\eta}[w]_{A_\infty}w(R).
\end{align*}
Then we have that the Carleson condition holds with $A=\frac{1}{\eta}[w]_{A_\infty}$. This ends the proof of the lemma.
\end{proof}

Now we are in position to prove Lemma \ref{LemmaMr}.
\begin{proof}[Proof of Lemma \ref{LemmaMr}]  We follow some of the key ideas from \cite{LOP1,LOP2} (see also \cite{HP2}).
By duality, it suffices to prove that
\[
\bigg\| \frac{[b,\widetilde{T}_{j}^{N}]f}{M_{r}w}\bigg\|_{L^{p'}(M_{r}w)}\leq c_{n}\big(1+N(j)\big)\|\Omega\|_{L^{\infty}}\|b\|_{\BMO}(p'p)^{2}(r')^{1+\frac{1}{p'}}
\Big\| \frac{f}{w}\Big\|_{L^{p'}(w)}.
\]
We calculate the norm by duality, so we have that
\[
\bigg\| \frac{[b,\widetilde{T}_{j}^{N}]f}{M_{r}w}\bigg\|_{L^{p'}(M_{r}w)}=\sup_{\|h\|_{L^{p}(M_{r}w)}=1}
 \int_{\mathbb{R}^{n}} \big| [b,\widetilde{T}_{j}^{N}]f(x)\big| h(x)dx. 
\]
We need a version of the Rubio de Francia algorithm suited for this situation (see \cite[Chapter IV.5]{GCRdF} and \cite{CUMP} for a detailed account on Rubio de Francia algorithm and  several applications).
Consider the operator
$$S(f)=\frac{M\big(f(M_{r}w)^{\frac{1}{p}}\big)}{(M_{r}w)^{\frac{1}{p}}}$$
and observe that $S$ is bounded on $L^{p}(M_{r}w)$ with norm bounded by a dimensional multiple of $p'$.
We define
\[
R(h)=\sum_{k=0}^{\infty}\frac{1}{2^{k}}\frac{S^{k}h}{\|S\|_{L^{p}(M_{r}w)}^{k}}.
\]
This operator has the following properties:
\begin{enumerate}
\item[(a)] $0\leq h\leq R(h)$,
\item[(b)] $\|Rh\|_{L^{p}(M_{r}w)}\leq2\|h\|_{L^{p}(M_{r}w)}$,
\item[(c)] $R(h)(M_{r}w)^{\frac{1}{p}}\in A_{1}$ with $[R(h)(M_{r}w)^{\frac{1}{p}}]_{A_{1}}\leq cp'$.
\end{enumerate}
We also observe that $[Rh]_{A_{\infty}}\leq[Rh]_{A_{3}}\leq c_{n}p'$.
Now
\[
\int_{\mathbb{R}^{n}} \big|  [b,\widetilde{T}_{j}^{N}]f(x)\big|h(x)dx \leq \int_{\mathbb{R}^{n}}\big|[b,\widetilde{T}_{j}^{N}]f(x)\big|Rh(x)dx.
\]
Let $C_{\widetilde{T}_{j}^{N}}$ denote the constant defined in \eqref{eq:CT}, related to the operator $\widetilde{T}_{j}^{N}$. Using Theorem~\ref{thm:ConmDomination} we get
\[
\int_{\mathbb{R}^{n}}\Big|[b,\widetilde{T}_{j}^{N}]f(x)\Big|Rh(x)dx\leq c_{n}C_{\widetilde{T}_{j}^{N}}\int_{\mathbb{R}^{n}}\sum_{j=1}^{3^{n}}
\big(\mathcal{T}_{\mathscr{S}_{j},b}|f|(x)+\mathcal{T}_{\mathscr{S}_{j},b}^{*}|f|(x)\big)Rh(x)dx
\]
and it suffices to obtain estimates for
\[
I:=\int_{\mathbb{R}^{n}}\mathcal{T}_{\mathscr{S}_{j},b}|f|(x)Rh(x)dx\qquad\text{and}\qquad II:=\int_{\mathbb{R}^{n}}\mathcal{T}_{\mathscr{S}_{j},b}^{*}|f|(x)Rh(x)dx.
\]
First we focus on $I$. Choosing $s=1+\frac{1}{c_{n}[Rh]_{A_{\infty}}}$,
by \eqref{HGenBMOLlogL}, \eqref{eq:desig}, RHI, and the property~(c) above, we have
\begin{align*}I  =\int_{\mathbb{R}^{n}}\mathcal{T}_{\mathscr{S}_{j},b}|f|(x)Rh(x)dx&\leq\sum_{Q\in\mathscr{S}_{j}}\int_{Q}|b(x)-b_{Q}|Rh(x)dx\frac{1}{|Q|}\int_{Q}|f|dy\\
 & \leq2\|b\|_{BMO}\sum_{Q\in\mathscr{S}_{j}}\|Rh\|_{L\log L,Q}\int_{Q}|f|dy\\
 &\leq 2s'\|b\|_{BMO}\sum_{Q\in\mathscr{S}_{j}}\left(\frac{1}{|Q|}\int_{Q}Rh^{s}\right)^{\frac{1}{s}}\int_{Q}|f|dy\\
 & \leq c_n[Rh]_{A_{\infty}}\|b\|_{\BMO}\sum_{Q\in\mathscr{S}_{j}}Rh(Q)\frac{1}{|Q|}\int_{Q}|f|dy\\
 &\leq c_{n}p'\|b\|_{\BMO}\sum_{Q\in\mathscr{S}_{j}}Rh(Q)\frac{1}{|Q|}\int_{Q}|f|dy.
\end{align*}
We apply now Lemma \ref{4.1HP2Gen} with $\Psi(t)=t$, thus
\[
\sum_{Q\in\mathscr{S}_{j}}Rh(Q)\frac{1}{|Q|}\int_{Q}|f|dy\leq8[Rh]_{A_{\infty}}\|Mf\|_{L^{1}(Rh)}\leq c_np'\|Mf\|_{L^{1}(Rh)}.
\]
From here
\[
\|Mf\|_{L^{1}(Rh)}\leq\Big(\int_{\mathbb{R}^{n}}|Mf|^{p'}\big(M_{r}w\big)^{1-p'}\Big)^{\frac{1}{p'}}\Big(\int_{\mathbb{R}^{n}}(Rh)^{p}M_{r}w\Big)^{\frac{1}{p}}\leq2\Big\| \frac{Mf}{M_{r}w}\Big\| _{L^{p'}(M_{r}w)}.
\]
By \cite[Lemma 3.4]{LOP1} (see also \cite[Lemma 2.9]{OC})
\[
\Big\|\frac{Mf}{M_{r}w}\Big\|_{L^{p'}(M_{r}w)}\leq c p(r')^{\frac{1}{p'}}\Big\|\frac{f}{w}\Big\|_{L^{p'}(w)}.
\]
Summarizing,
\[
I\leq c_{n}\|b\|_{\BMO}p(p')^{2}(r')^{\frac{1}{p'}}\left\Vert \frac{f}{w}\right\Vert _{L^{p'}(w)}.
\]

Now we focus on $II$. A direct application of \eqref{HGenBMOLlogL} yields
\[
II\leq\sum_{Q\in\mathscr{S}_{j}}\Big(\frac{1}{|Q|}\int_{Q}|b(y)-b_{Q}|f(y)dy\Big)Rh(Q)\leq c_n\|b\|_{\BMO}\sum_{Q\in\mathscr{S}_{j}}\|f\|_{L\log L,Q}Rh(Q)
\]
Then, we use Lemma \ref{4.1HP2Gen} with $\Psi(t)=t\log(e+t)$, leading to the following estimate
\[
\sum_{Q\in\mathscr{S}_{j}}\|f\|_{L\log L,Q}Rh(Q)\leq8[Rh]_{A_{\infty}}\|M_{L\log L}f\|_{L^{1}(Rh)}.
\]
Proceeding as in the estimate of $I$,
\[
\|M_{L\log L}f\|_{L^{1}(Rh)}\leq2\Big\|\frac{M_{L\log L}f}{M_{r}w}\Big\| _{L^{p'}(M_{r}w)}.
\]
Now \cite[Proposition 3.2]{OC} gives
\[
\Big\|\frac{M_{L\log L}f}{M_{r}w}\Big\|_{L^{p'}(M_{r}w)}\leq c_{n}p^{2}(r')^{1+\frac{1}{p'}}\Big\|\frac{f}{w}\Big\|_{L^{p'}(w)}.
\]
We combine all the estimates and we have that
\[
II\leq c_{n}\|b\|_{\BMO}p'p^{2}(r')^{1+\frac{1}{p'}}\Big\| \frac{f}{w}\Big\|_{L^{p'}(w)}.
\]
Finally, collecting the estimates we have obtained for $I$ and $II$
\[
\Big\|\frac{[b,\widetilde{T}_{j}^{N}]f}{M_{r}w}\Big\|_{L^{p'}(M_{r}w)}\leq c_{n}C_{\widetilde{T}_{j}^{N}}\|b\|_{\BMO}(p'p)^{2}(r')^{1+\frac{1}{p'}}\Big\|\frac{f}{w}\Big\|_{L^{p'}(w)}.
\]
Since $C_{\widetilde{T}_{j}^{N}}\leq c_{n}\|\Omega\|_{L^{\infty}}(1+N(j))$ (recall the definition in \eqref{eq:CT}), we arrive at the desired bound. The proof of the lemma is complete.
\end{proof}

\subsection{A basic summation result}

In this subsection we prove the following key result that was used in \cite{HRT} and which will be used in the proof of Theorem \ref{thm:main1}.
\begin{lemma}\label{Summation}
Let $N(j)=2^j$ for $j> 0$ and $N(-1)=N(0)=0$. Let $\alpha>0$ and $0<\theta<1$. Then  there exists some $c>0$ depending on $\alpha$ such that
$$
\sum_{j=0}^\infty  (1+N(j))2^{-\alpha N(j-1) \theta} \leq \frac{c}{\theta}.
$$
\end{lemma}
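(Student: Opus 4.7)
The plan is to separate the first two terms $j=0$ and $j=1$, which contribute a bounded absolute constant since $N(-1)=N(0)=0$, and then focus on the tail $j\geq 2$. For $j\geq 2$ one has $N(j-1)=2^{j-1}$ and $1+N(j)=1+2^{j}\leq 3\cdot 2^{j-1}$. Writing $k:=j-1$ and $t:=\alpha\theta$, it therefore suffices to establish
\[
\sum_{k\geq 1} 2^k\, 2^{-t\cdot 2^k}\leq \frac{C(\alpha)}{t},
\]
with $C(\alpha)$ depending only on $\alpha$. (For $t\geq 1$ this bound is trivial, so one may assume $t<1$.)

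The key idea is to split the sum at the dyadic threshold $k_0:=\lfloor\log_2(1/t)\rfloor\geq 0$, at which the exponent $t\cdot 2^{k}$ crosses $1$. On the low range $k\leq k_0$ one uses $2^{-t\cdot 2^k}\leq 1$ and sums the geometric series, obtaining $\sum_{k=1}^{k_0} 2^k\leq 2^{k_0+1}\leq 2/t$. On the high range $k>k_0$, writing $k=k_0+i$ with $i\geq 1$ gives $2^k\leq 2^i/t$ and $t\cdot 2^k\geq 2^{i-1}$, and therefore
\[
\sum_{k>k_0} 2^k\, 2^{-t\cdot 2^k}\leq \frac{1}{t}\sum_{i\geq 1} 2^i\, 2^{-2^{i-1}},
\]
where the last series converges to an absolute constant thanks to the doubly exponential decay of $2^{-2^{i-1}}$. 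Adding the two contributions yields the desired bound $O(1/t)=O(1/\theta)$, which after reinserting the discarded constants completes the argument.

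I do not expect any real obstacle: the estimate is elementary. The only point worth emphasizing is the crucial role of the doubling structure $N(j)=2N(j-1)$, which lets the argument pass from a naive integral comparison of the type $\int_0^\infty xe^{-tx}\,dx\sim 1/t^2$ (which would be too weak) to the much sharper $1/t$ bound, by effectively sampling only at a sparse dyadic sequence of scales where the integrand $xe^{-tx}$ is concentrated.
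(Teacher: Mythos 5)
Your proof is correct and follows essentially the same strategy as the paper: split the sum at the dyadic threshold where $\theta\, 2^{j}\sim 1$, bound the low range by discarding the exponential factor and summing the geometric series, and show the high range also contributes $O(1/\theta)$. The only difference is in the high-range tail: the paper applies the elementary inequality $e^{-x}\le 2x^{-2}$ termwise, so each term becomes $\lesssim 1/(2^{j}\theta^{2})$ and the geometric sum over $2^{j}\ge\theta^{-1}$ gives $O(1/\theta)$; you instead re-index by $k=k_{0}+i$, pull out the factor $1/t$, and sum the absolutely convergent doubly exponential tail $\sum_{i}2^{i}2^{-2^{i-1}}$. Both are short and equivalent in content. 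Your closing remark correctly identifies the key structural point — the lacunary growth $N(j)=2N(j-1)$ is what upgrades the naive $1/\theta^{2}$ bound to $1/\theta$ — which the paper exploits but does not spell out.
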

\begin{proof}
Let us take $\{N(j)\}_{j\ge0}$ so that $N(j)=2^j$ for $j\geq 1$ and $N(0)=0$, and let us establish $N(-1)=0$. Observe also that $e^x\geq \frac12 x^2$ and hence
 \begin{equation}
\label{eq:obvio}
e^{-x}\leq 2 x^{-2}.
\end{equation}
We split the sum into two parts
$$
\sum_{j=0}^\infty (1+N(j))2^{-\alpha N(j-1)\theta}=\Big(\sum_{j:2^j\leq\theta^{-1}}+\sum_{j:2^j\geq\theta^{-1}}\Big)  (1+N(j))2^{-\alpha N(j-1)\theta}=:S_1+S_2.
$$
 For $S_1$, we have
  \begin{align*}
 S_1& \leq c\sum_{j:2^j\leq\theta^{-1}}2^j2^{-C2^j\theta}\le c\sum_{j:2^j\leq\theta^{-1}}2^j\le c\theta^{-1}.
\end{align*}
On the other hand, for $S_2$, by using \eqref{eq:obvio},
\begin{align*}
 S_2\le c\sum_{j:2^j\geq\theta^{-1}}2^j\Big(\frac{1}{2^j\theta}\Big)^2
 \leq \frac{c}{\theta},
\end{align*}
and the conclusion follows.
\end{proof}

\section{Proofs of the main results}\label{PruebasThmCor}
\subsection{Proof of Theorem \ref{thm:main1}}
The preceding section contained unweighted estimates for each of the pieces $\widetilde{T}_j^N$ (and also for the commutators of these pieces) which have a very good decay of exponential type whereas their corresponding  weighted estimates are of polynomial growth nature. Following the strategy in \cite[Subsection 3.4]{HRT}, the main idea to prove Theorem \ref{thm:main1} is then to somehow combine both estimates to obtain the desired result. That combination can be performed by using  interpolation with change of measures.

Let us establish the result for $T_\Omega$  first. In view of Lemma \ref{lem:Lpunweighted} and Lemma \ref{lem:Lpweighted}, by applying the interpolation theorem with change of measures (\cite[p.115, Theorem 5.4.1]{BL} or \cite[Theorem 2.11]{Stein-Weiss}) we have that, for $0<\theta<1$,
$$
\|\widetilde{T}_j^N f\|_{L^p(w^{\theta})}
\le  c_{n,p}\|\Omega\|_{L^\infty}(p')^{\theta}2^{-\alpha_p N(j-1)(1-\theta)}\big(1+N(j)\big)(r')^{\frac{\theta}{p'}} \|f\|_{L^p(M_{r/\theta}(w^{\theta}))}.
$$
Thus
\begin{align*}
  \|T_\Omega f&\|_{L^p(w^{\theta})}\leq\sum_{j=0}^\infty\|\widetilde{T}_j^N f\|_{L^p(w^{\theta})} \\
  &\leq C_{n,p}\|\Omega\|_{L^\infty}(r')^{\frac{\theta}{p'}} \|f\|_{L^p(M_{r/\theta}(w^{\theta}))}\sum_{j=0}^\infty (1+N(j))2^{-\alpha_{p,n}N(j-1)(1-\theta)}.
\end{align*}
Now a direct application of Lemma \ref{Summation} yields
$$
 \|T_\Omega f\|_{L^p(w^{\theta})}\le C_{n,p}\|\Omega\|_{L^\infty}\frac{1}{1-\theta}(r')^{\frac{\theta}{p'}}\|f\|_{L^p(M_{r/\theta}(w^{\theta}))},
$$
or, calling $u:=w^{\theta}$,
$$
 \|T_\Omega f\|_{L^p(u)}\le C_{n,p}\|\Omega\|_{L^\infty}\frac{1}{1-\theta}(r')^{\frac{\theta}{p'}}\|f\|_{L^p(M_{r/\theta}(u))}.
$$
This yields the proof of \eqref{eq:desigr}.

Now we consider the operator $[b,T_\Omega]$. By using Lemmas \ref{LemmaUnweighted} and \ref{LemmaMr}, again by the interpolation theorem
with change of measures we obtain, for $0<\theta<1$,
\begin{align*}
\big\| [b,\widetilde{T}_{j}^{N}]f\big\|_{L^{p}(w^{\theta})}
\leq & c_{n,p}\|b\|_{\BMO}\|\Omega\|_{L^{\infty}}(r')^{\theta\big(1+\frac{1}{p'}\big)}\|f\|_{L^{p}(M_{r/\theta}(w^{\theta}))}\\
&\qquad \times (1+N(j))2^{-\alpha_{p,n}N(j-1)(1-\theta)}.
\end{align*}
Consequently,
\begin{align*}
\big\|[b,T_{\Omega}]f\big\|_{L^{p}(w^{\theta})}
\leq & \sum_{j=0}^{\infty}\big\|[b,\widetilde{T}_{j}^{N}]f\big\|_{L^{p}(w^{\theta})}\\
\leq & c_{n,p}\|b\|_{\BMO}\|\Omega\|_{L^{\infty}}(r')^{\theta\big(1+\frac{1}{p'}\big)}\|f\|_{L^{p}(M_{r/\theta}(w^{\theta}))}
\\
&\qquad \times  \sum_{j=0}^{\infty}(1+N(j))2^{-\alpha_{p,n}N(j-1)(1-\theta)}.
\end{align*}
Using again Lemma \ref{Summation}, we have that
\[
\sum_{j=0}^{\infty}(1+N(j))2^{-\alpha_{p,n}N(j-1)(1-\theta)}\leq C_{n,p}(1-\theta)^{-1}.
\]
In conclusion, we get
\[
\big\|[b,T_{\Omega}]f\big\|_{L^{p}(w^{\theta})}\leq c_{n,p}\|b\|_{\BMO}\|\Omega\|_{L^{\infty}}\frac{1}{1-\theta}(r')^{\theta\big(1+\frac{1}{p'}\big)}\|f\|_{L^{p}(M_{r/\theta}(w^{\theta}))}
\]
and calling $u:=w^{\theta},$
\[
\big\|[b,T_{\Omega}]f\big\|_{L^{p}(u)}\leq c_{n,p}\|b\|_{\BMO}\|\Omega\|_{L^{\infty}}\frac{1}{1-\theta}(r')^{\theta\big(1+\frac{1}{p'}\big)}\|f\|_{L^{p}(M_{r/\theta}(u))}
\]
%
we obtain \eqref{eq:desigrComm}, concluding the proof of Theorem~\ref{thm:main1}.

\subsection{Proof of Corollary \ref{cor1}}
We focus on the result concerning $T_\Omega$. We omit the proof for $[b,T_\Omega]$ since it follows analogously with minor modifications.

Fix  $p,s \in (1,\infty)$. By Theorem \ref{thm:main1}  with $r=\theta s$ we have that, for $\theta \in (\frac1s,1)$,
$$
 \|T_\Omega f\|_{L^p(w)}\le C_{n,p}\|\Omega\|_{L^\infty}\frac{1}{1-\theta}\big((\theta s)'\big)^{\frac{\theta}{p'}}\|f\|_{L^p(M_{s}(w))}.
$$
Now we choose $\theta=\frac{(2s')'}{s}$. We observe that $\theta \in (\frac1s,1)$ and also
that $\frac{1}{1-\theta} =2s'-1$. Hence, we conclude that for any $s>1$,
$$
 \|T_\Omega f\|_{L^p(w)}\le C_{n,p}\|\Omega\|_{L^\infty} (s')^{1+\frac{1}{p'}}\|f\|_{L^p(M_{s}(w))}  \qquad w\geq 0.
$$
Now, if $w\in A_{\infty}$, by Theorem \ref{thm:SharpRHI} there exists a dimensional constant $\tau_{n}$ and we can choose $s_{w}= 1+\frac{1}{\tau_{n}[w]_{A_{\infty}}}$ such that
\begin{equation*}
  \Big(\frac{1}{|Q|}\int_{Q}w^{s_{w}}\Big)^{\frac{1}{s_{w}}}\leq\frac{2}{|Q|}\int_{Q}w.
\end{equation*}
 Then,
\begin{align*}
\|T_\Omega f\|_{L^p(w)} &\leq C_{n,p}\|\Omega\|_{L^\infty} (s_{w}')^{1+\frac{1}{p'}}\|f\|_{L^p(M_{s_{w}}(w))}\\
&\leq C_{n,p}\,\|\Omega\|_{L^\infty}   [w]_{A_{\infty}}^{1+1/p'}  \,\|f\|_{L^p(M(w))},
\end{align*}
and hence
$$
\|T_\Omega\|_{L^p(w)} \leq C_{n,p}\,\|\Omega\|_{L^\infty}   [w]_{A_{\infty}}^{1+ \frac{1}{p'} }  \,[w]_{A_1}^{\frac{1}{p}}.
$$
This ends the proof of Corollary \ref{cor1}.

\section*{Acknowledgements}
The second author would like to thank the hospitality and the support of the Department of Mathematics   of the University of the Basque Country (UPV/EHU) for allowing him to use their facilities during the development of this research project.

\section*{References}


\begin{thebibliography}{00}

\bibitem{ABKP} J. \'Alvarez, R. Bagby, D. Kurtz, and C. P\'erez,
Weighted estimates for commutators of linear operators,
\textit{Studia Math.} \textbf{104} (2) (1993), 195--209.

\bibitem{AIS} K. Astala, T. Iwaniec, and E. Saksman,
Beltrami operators in the plane,
\textit{Duke Math. J.} \textbf{107} (2001), no. 1, 27--56.


\bibitem{BL}
J. Bergh and J. L\"ofstr\"om,
\textit{Interpolation Spaces. An Introduction},
Grundlehren der Mathematischen Wissenschaften, \textbf{223}. Springer-Verlag, Berlin-New York, 1976.

\bibitem{B} S. M. Buckley,
Estimates for operator norms on weighted spaces and reverse Jensen inequalities,
\textit{Trans. Amer. Math. Soc.} \textbf{340} (1993), 253--272.

\bibitem{CRW} R. R. Coifman, R. Rochberg, and G. Weiss,
Factorization theorems for Hardy spaces in several variables,
\textit{Ann. of Math. (2)} \textbf{103} (3) (1976), 611--635.


\bibitem{CAR}
J. M. Conde-Alonso and G. Rey,
A pointwise estimate for positive dyadic shifts and some applications,
\textit{Math. Ann.}, doi:10.1007/s00208-015-1320-y.

\bibitem{CPP} D. Chung, M. C. Pereyra, and C. P\'erez,
Sharp bounds for general commutators on weighted Lebesgue spaces,
\textit{Trans. Amer. Math. Soc.} \textbf{364} (2012), 1163--1177.

\bibitem{CUMP}D. Cruz-Uribe, J. M. Martell, and C. P\'erez,
 \textit{Weights, Extrapolation and the Theory of Rubio de Francia,}
 Operator Theory: Advances and Applications, \textbf{215}. Birkh\"auser/Springer Basel AG, Basel, 2011.


\bibitem{Duo-HomenajeRubio} J. Duoandikoetxea,
The work of Jose Luis Rubio De Francia III,
\textit{Publ. Mat.} (2) \textbf{35} (1991), 65--80.

\bibitem{Duo-Trans}
J. Duoandikoetxea,
Weighted norm inequalities for homogeneous singular integrals,
\textit{Trans. Amer. Math. Soc.} \textbf{336} (1993), 869--880.

\bibitem{Duolibro}  J. Duoandikoetxea,
\textit{Fourier analysis}.
American Mathematical Society, Providence, 2001.

\bibitem{Duo-JFA}
J. Duoandikoetxea,
Extrapolation of weights revisited: New proofs and sharp bounds,
\textit{J. Funct. Anal.} \textbf{260} (2015), 1886--1901.


\bibitem{DuoRdF}
J. Duoandikoetxea and J. L. Rubio de Francia,
Maximal and singular integral operators via Fourier transform estimates,
\textit{Invent. Math}. \textbf{84} (1986), 541--561.

\bibitem{FS}
C. Fefferman and E. M. Stein,
Some maximal inequalities,
\textit{Amer. J. Math.} \textbf{93} (1971), 107--115.


\bibitem{F} N. Fujii,
Weighted bounded mean oscillation and singular integrals,
\textit{Math. Japon.} \textbf{22} (1977/1978), 529--534.

\bibitem{GCRdF}J. Garc\'{\i}a-Cuerva, J. L. Rubio de Francia
\textit{Weighted Norm Inequalities and Related Topics,}
North-Holland Mathematics Studies, \textbf{116}. Notas de Matem\'atica [Mathematical Notes], 104. North-Holland Publishing Co., Amsterdam, 1985.

\bibitem{G} L. Grafakos,
\textit{Modern Fourier analysis. Second edition.}
 Graduate Texts in Mathematics, \textbf{250}. Springer, New York, 2009.




\bibitem{HA2} T. P. Hyt\"onen,
The sharp weighted bound for general Calder\'on-Zygmund operators,
\textit{Ann. of Math.} (2) \textbf{175} (2012), no. 3, 1473--1506.


\bibitem{HL} T. P.  Hyt\"onen and K. Li,
Weak and strong $A_p-A_\infty$ estimates for square functions and related operators,
arXiv:1509.00273 (2015).

\bibitem{HP1}
T. P. Hyt\"onen and C. P\'erez.
Sharp weighted bounds involving ${A}_{\infty}$,
\textit{Anal. PDE}, \textbf{6} (2013), 777--818.



\bibitem{HP2}
T. P. Hyt\"onen and C. P\'erez,
The $L(\log L)^{\varepsilon}$ endpoint estimate for maximal singular integrals,
\textit{J. Math. Anal. Appl.} \textbf{428} (2015), 605--626.

\bibitem{HPR1}
T. P. Hyt{\"o}nen, C. P{\'e}rez, and E. Rela,
Sharp Reverse H\"older property for $A_\infty$ weights on
  spaces of homogeneous type,
\textit{J. Funct. Anal.} \textbf{263} (2012), 3883--3899.


\bibitem{HRT}
T. P. Hyt\"onen, L. Roncal, and O. Tapiola,
Quantitative  weighted estimates for rough homogeneous singular integrals,
to appear in \textit{Israel J. Math.}, arXiv:1510.05789 (2015).

\bibitem{La}
M. T. Lacey,
An elementary proof of the $A^2$ Bound,
to appear in \textit{Israel J. Math.},  arXiv:1501.05818 (2015).

\bibitem{Le2} A. K. Lerner,
A simple proof of the $A_2$ conjecture,
\textit{Int. Math. Res. Not. IMRN} \textbf{14} (2013), 3159--3170,

\bibitem{L}
A. K. Lerner,
On pointwise estimates involving sparse operators,
\textit{New York J. Math.} \textbf{22} (2016), 341--349.

\bibitem{LN}
A. K. Lerner and F. Nazarov, Intuitive dyadic calculus: the basics,
arXiv:1508.05639 (2015).


\bibitem{LOP1}
A. K. Lerner, S. Ombrosi, and C. P\'erez, {\it Sharp $A_1$ bounds for
Calder\'on-Zygmund operators and the relationship with a problem of
Muckenhoupt and Wheeden,} Int. Math. Res.
Not. IMRN \textbf{14} (2008), Art. ID rnm161, 11 pp.

\bibitem{LOP2} A. K. Lerner, S. Ombrosi, and C. P\'erez,
$A_1$ bounds for Calder\'o—n--Zygmund operators related to a problem of Muckenhoupt and Wheeden,
\textit{Math. Res. Lett.} \textbf{16} (2009), no. 1, 149--156.


\bibitem{LORR} A. K. Lerner, S. Ombrosi, and I. P. Rivera-R\'{\i}os,
On pointwise and weighted estimates for commutators of Calder\'on-Zygmund
operators,
arXiv:1604.01334 (2016).

\bibitem{NRVV}
F. Nazarov, A. Reznikov, V. Vasyunin, and A. Volberg.
A Bellman function counterexample to the $A_1$ conjecture: the blow-up of the weak norm estimates of weighted singular
operators,
arXiv:1506.04710 (2015).


\bibitem{OC} C. Ortiz-Caraballo,
Quadratic $A_{1}$ bounds for
commutators of singular integrals with $\BMO$ functions,
\textit{Indiana Univ.
Math. J.} \textbf{60} (2011), no. 6, 2107--2130.

\bibitem{P}
C. P\'erez, {\em Weighted norm inequalities for singular
integral operators}, J. London Math. Soc. {\bf 49} (1994), 296--308.

\bibitem{PRR}
C. P\'erez and I. P. Rivera-R\'{\i}os,
Three observations on commutators of Singular Integral operators with BMO functions,
to appear, AWM-Springer Series, Harmonic Analysis, Partial Differential Equations, Complex Analysis, Banach Spaces, and Operator Theory.  Celebrating Cora Sadosky's  Life. Vol.2. Editors: S. Marcantognini, C. Pereyra, A. Stokolos and W. Urbina.

\bibitem{P1} S. Petermichl,
The sharp bound for the Hilbert transform on weighted Lebesgue spaces in terms of the classical $A_p$ characteristic,
\textit{Amer. J. Math.} \textbf{129} (2007), 1355--1375,

\bibitem{P2} S. Petermichl, The sharp weighted bound for the Riesz transforms,
\textit{Proc. Amer. Math. Soc.} \textbf{136} (2008), no. 4, 1237--1249.

\bibitem{PV} S. Petermichl and A. Volberg,
Heating of the Ahlfors--Beurling operator: weakly quasiregular maps on the plane are quasiregular,
\textit{Duke Math. J.} \textbf{112} (2002), no. 2, 281--305,

\bibitem{PKJF}
L. Pick, A. Kufner, O. John, and  S. Fu\u{c}ik,
\textit{Function spaces. Vol. 1.}
Second revised and extended edition.
De Gruyter Series in Nonlinear Analysis and Applications, \textbf{14}. Walter de Gruyter and Co., Berlin, 2013.

\bibitem{RR}
M. M. Rao and Z. D. Ren,
\textit{Theory of Orlicz Spaces.}
 Monographs and Textbooks in Pure and Applied Mathematics, \textbf{146}. Marcel Dekker, Inc., New York, 1991.

\bibitem{RT} M. C. Reguera and C. Thiele,
The Hilbert transform does not map $L^1(Mw)$ to $L^{1,\infty}(w)$,
\textit{Math. Res. Lett.} \textbf{19} (2012), no. 1, 1--7.


\bibitem{Stein-Weiss}
E. M. Stein and G. Weiss,
Interpolation of operators with change of measures,
\textit{Trans. Amer. Math. Soc.} \textbf{87} (1958), 159--172.

\bibitem{W} D. K. Watson, Weighted estimates for singular integrals via Fourier transform estimates,
\textit{Duke Math. J.} \textbf{60} (1990), 389\textendash399.

\bibitem{Wi} J. M. Wilson, Weighted inequalities for the dyadic square function without dyadic $A_{\infty}$.
\textit{Duke Math. J.} \textbf{55} (1987), no. 1, 19--50.


\end{thebibliography}
\end{document}